\newtheorem{theorem}{Theorem}[section]
\newtheorem{proposition}[theorem]{Proposition}
\newtheorem{lemma}[theorem]{Lemma}
\theoremstyle{definition}
\newtheorem{problem}[theorem]{Problem}
\title{On the largest subsets avoiding the diameter of $(0,\pm 1)$-vectors} 
\author{
Saori Adachi and Hiroshi Nozaki
}
\begin{document}
\maketitle

\renewcommand{\thefootnote}{\fnsymbol{footnote}}
\footnote[0]{2010 Mathematics Subject Classification: 
05D05  
(05C69).
\\
\noindent
{\it Saori Adachi}: 
	Mathematics Education, 
	Graduate School of Education, 
	Aichi University of Education, 
	1 Hirosawa, Igaya-cho, 
	Kariya, Aichi 448-8542, 
	Japan.
	s214m044@auecc.aichi-edu.ac.jp.
\\ \noindent
{\it Hiroshi Nozaki}: 
	Department of Mathematics Education, 
	Aichi University of Education, 
	1 Hirosawa, Igaya-cho, 
	Kariya, Aichi 448-8542, 
	Japan.
	hnozaki@auecc.aichi-edu.ac.jp.
}

\begin{abstract}
Let $L_{mkl}\subset \mathbb{R}^{m+k+l}$ be 
the set of vectors which have $m$ of entries $-1$, $k$ of entries $0$, and $l$ of entries $1$. 
In this paper, we investigate 
the largest subset of $L_{mkl}$ 
whose diameter is smaller than that of 
$L_{mkl}$. The largest subsets for 
$m=1$, $l=2$, and any $k$ will be classified. From this result, we can classify the largest $4$-distance sets containing the Euclidean representation of the Johnson scheme $J(9,4)$. 
This was an open problem in Bannai, Sato, and Shigezumi (2012). 
\end{abstract}
\textbf{Key words}:
the Erd\H{o}s--Ko--Rado theorem, 
$s$-distance set,
diameter graph,  
independent set, 
extremal set theory. 

\section{Introduction}
The famous theorem in Erd\H{o}s--Ko--Rado \cite{EKR61} stated that for $n \geq 2k$ and a family $\mathfrak{A}$ of $k$-element subsets of $I_n=\{1,\ldots, n\}$, if any two distinct $A,B \in \mathfrak{A}$ satisfy $A\cap B \ne \emptyset$, then
\[
|\mathfrak{A}| \leq \binom{n-1}{k-1}. 
\]
For $n>2k$, the set  
$
\{A\subset I_n \mid |A|=k, 1 \in A \}
$
is the unique family achieving equality, up to permutations on $I_n$. 
For $n=2k$, the largest set is any family  
which contains only one of $A$ or $I_n\setminus A$ for any $k$-element $A\subset I_n$. 
This result plays a central role in extremal set theory, and similar or analogous theorems are proved for various objects \cite{B12,DF83,F87}. 

We can naturally interpret  $A \subset I_n$ as $x=(x_1, \ldots, x_n)\in \mathbb{R}^n$ by the manner 
$x_i=1$ if $i \in A$, $x_i=0$ if $i \not\in A$.  
By this identification, 
the Erd\H{o}s--Ko--Rado theorem can be rewritten that for
$n \geq 2k$ and a 
subset $X$ of $L_k=\{x \in \mathbb{R}^n \mid x_i \in \{0,1\}, \sum x_i=k \}$ if any
distinct $x,y \in X$ satisfy $d(x,y) < D(L_k)=\sqrt{2k} $, then 
\[
|X| \leq \binom{n-1}{k-1},
\]   
where $d(,)$ is the Euclidean distance, and $D(L_k)$ is the diameter of $L_k$. 
We would like to consider the following problem to generalize the Erd\H{o}s--Ko--Rado theorem. 
\begin{problem}
Let $L_{mkl}\subset \mathbb{R}^{m+k+l}$ be 
the set of vectors which have $m$ of entries $-1$, $k$ of entries $0$, and $l$ of entries $1$. 
Classify the largest $X \subset L_{mkl}$ with $D(X)<D(L_{mkl})$. 
\end{problem}
It is almost obvious for the cases $m=l$ (Proposition~\ref{prop:m=l}) and 
$m+k \leq l$ (Proposition~\ref{prop:m+k<=l}). In this paper, we solve the first non-trivial case $m=1$, $l=2$ and any $k$ (Theorem~\ref{thm:main}). 
Using the largest sets for the case 
$(m,k,l)=(1,6,2)$, we can classify 
the largest $4$-distance sets containing the Euclidean representation of 
the Johnson scheme $J(9,4)$.  
This was an open problem in 
\cite{BSS12}. 

We will give a brief survey on related results. 
Let $\mathfrak{L}_{nm}$ be the set of $(0,\pm 1)$-vectors in $\mathbb{R}^n$ which have $m$ non-zero coordinates. 
For a fixed set $D$ of integers, let $V(n,m,D)$ be the family of subsets $V=\{v_1,\ldots,v_k \}$ of $\mathfrak{L}_{nm}$ such that $(v_i,v_j)\in D$ for any $i\ne j$. 
There are several results relating to the largest sets in $V(n,m,D)$ for some $(n,m,D)$ \cite{DF81,DF83_2,DF85}. 
Since $X \subset \mathfrak{L}_{nm}$ is on a sphere, 
if $|D|=s$ holds, then $|X| \leq \binom{n+s-1}{s}+\binom{d+s-2}{s-1}$ \cite{DGS77}.  
The case $D=\{d \}$ is investigated in \cite{DF81}. 
For non-negative integers $d<m$, $t\geq 2$, and $n>n_0(m)$ (see \cite{DF81} about $n_0(m)$), if $X \in 
V(n,m,\{d,d+1,\ldots,d+t-1\})$, then $|X|\leq \binom{n-d}{t}/\binom{m-d}{t}$~\cite{DF83_2}. 
This equality can be attained whenever a Steiner system $S(n-d,m-d,t)$ (equivalently $t$-$(n-d,m-d,1)$ design) exists . 
We also have  
if $X \in 
V(n,m,\{-(t-1),-(t-2),\ldots, t-1\})$, then $|X|\leq 2^{t-1}(m-t+1)\binom{n}{t}/\binom{m}{t}$ \cite{DF85}.
When $m=t+1$, this equality can be attained whenever a Steiner system $S(n,m,m-1)$ exists. 
  
%

\section{Largest subsets avoiding the diameter of $L_{mkl}$} \label{sec:2}
Let $L_{mkl}$ denote the finite set in $\mathbb{R}^n=\mathbb{R}^{m+k+l}$, which consists of all vectors whose number of entries $-1$, $0$, $1$ is equal to $m$, $k$, $l$, respectively. 
For two subsets $X,Y$ of $L_{mkl}$,  
$X$ is {\it isomorphic} to $Y$ if there exists 
a permutation $\sigma \in S_{n}$ such that 
$X=\{(y_{\sigma(1)},\ldots,y_{\sigma(n)}) \mid (y_1,\ldots,y_n) \in Y\}$. 
The {\it diameter} $D(X)$ of $X\subset \mathbb{R}^n$ is defined to be  
\[
D(X)= \max \{d(x,y) \mid x, y  \in X \}, 
\]
where $d(,)$ is the Euclidean distance. 
Let $M_{mkl}$ denote the largest possible number of cardinalities of $X \subset L_{mkl}$ such that 
$D(X)<D(L_{mkl})$. 
The {\it diameter graph} of $X\subset \mathbb{R}^n$ is defined to be the graph $(X,E)$, where 
$E=\{(x,y) \mid d(x,y)=D(X)\}$. 
The problem of determining $M_{mkl}$ is equivalent to determining the independence number of the diameter graph of $L_{mkl}$. 
 Note that $M_{mkl}=M_{lkm}$ because we have $L_{mkl}=-L_{lkm}=\{-x \mid  x\in L_{lkm}\}$. Thus we may assume $m \leq l$. In this section, we determine $M_{mkl}$, and classify the largest sets
for several cases of $m$, $k$, $l$. 
 
First we determine $M_{mkl}$ for the  cases $m=l$ and $m+k\leq l$. 
\begin{proposition}\label{prop:m=l}
Assume $m=l$. Then we have
\[
M_{mkl} = \frac{1}{2}\binom{n}{m}\binom{k+m}{m}=\frac{1}{2}|L_{mkl}|, 
\]
and the largest sets contain only one of $x$ or $-x$ for any $x \in L_{mkl}$.   
\end{proposition}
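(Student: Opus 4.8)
The plan is to compute the diameter graph of $L_{mkl}$ explicitly and observe that, when $m=l$, it is a perfect matching. First I would note that every vector $x \in L_{mkl}$ has the same squared norm $\|x\|^2 = m+l$, since it has $m$ entries equal to $-1$ and $l$ entries equal to $1$ (the zero entries contribute nothing). Consequently, for any $x,y \in L_{mkl}$,
\[
d(x,y)^2 = \|x\|^2+\|y\|^2 - 2(x,y) = 2(m+l) - 2(x,y),
\]
so maximizing the distance is exactly the same as minimizing the inner product $(x,y)$.

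Next I would minimize $(x,y)$. By the Cauchy--Schwarz inequality, $(x,y) \geq -\|x\|\,\|y\| = -(m+l)$, with equality if and only if $y=-x$. The point specific to the hypothesis $m=l$ is that $-x$ again lies in $L_{mkl}$: negating a vector interchanges the roles of the $+1$ and $-1$ entries, and these occur with equal multiplicity $m=l$. Hence the lower bound $-(m+l)$ is actually attained inside $L_{mkl}$, giving $D(L_{mkl}) = 2\sqrt{m+l}$, and two vectors realize the diameter if and only if they form an antipodal pair $\{x,-x\}$. Therefore the diameter graph of $L_{mkl}$ joins each $x$ to $-x$ and to nothing else; that is, it is a perfect matching on the vertex set $L_{mkl}$.

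It then remains to read off the independence number. A perfect matching on $N$ vertices has independence number $N/2$, and a subset is a maximum independent set precisely when it contains exactly one endpoint of each edge, i.e. exactly one of $x$ or $-x$ for every antipodal pair; this gives both $M_{mkl} = \tfrac{1}{2}|L_{mkl}|$ and the stated classification. Finally I would verify the count $|L_{mkl}| = \binom{n}{m}\binom{k+m}{m}$ by choosing the $m$ positions of the $-1$ entries and then, among the remaining $n-m = k+m$ coordinates, the $m$ (since $l=m$) positions of the $+1$ entries. The argument has no genuine obstacle; the only step requiring care is confirming that the diameter is attained solely by antipodal pairs, which is exactly where the equal-multiplicity assumption $m=l$ enters and which is what collapses the diameter graph into a matching.
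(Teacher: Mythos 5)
Your proof is correct and takes essentially the same approach as the paper, which likewise observes that the only point of $L_{mkl}$ at diameter distance from $x$ is $-x$, so that the diameter graph is a union of independent edges (a perfect matching) whose maximum independent sets are exactly the transversals of the antipodal pairs. Your Cauchy--Schwarz computation merely fills in the details that the paper leaves implicit.
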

\begin{proof}
For any $x \in L_{mkl}$, we have 
$\{y \mid d(x,y)=D(L_{mkl})\}=\{-x\}$. Therefore the diameter graph of $L_{mkl}$ is the set of independent edges. The proposition can be easily proved from this fact. 
\end{proof}
For $X \subset L_{mkl}$, we use the notation 
\[
N_i(X,j)=\{(x_1,\ldots,x_n) \in X \mid x_i=j\}, \qquad \text{ and } \qquad n_i(X,j) = |N_i(X,j)|. 
\]
\begin{proposition} \label{prop:m+k<=l}
Assume $m+k \leq l$. Then we have 
\[
M_{mkl} = \binom{n-1}{m+k-1}\binom{m+k}{m}.
\]
For $m+k>l$, the largest set is $N_1(L_{mkl},-1)\cup N_1(L_{mkl},0)$, up to isomorphism. 
For $m+k=l$, then the largest sets  contain only one of 
$\{(x_1,\ldots,x_n) \in L_{mkl} \mid  x_i=1, \forall i \in J \}$ or $\{(x_1,\ldots,x_n) \in L_{mkl} \mid  x_i=1,\forall i \in  I_n \setminus J \}$ for any $J \subset I_n$ of order $l$. 
\end{proposition}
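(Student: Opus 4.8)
The plan is to reduce the statement to the Erd\H{o}s--Ko--Rado theorem by recording, for each vector, only its set of $+1$ positions. For $x=(x_1,\ldots,x_n)\in L_{mkl}$ I write $P(x)=\{i \mid x_i=1\}$ and $Q(x)=I_n\setminus P(x)=\{i\mid x_i\in\{-1,0\}\}$, so that $|P(x)|=l$ and $|Q(x)|=m+k$. Since every vector of $L_{mkl}$ has the same norm $\|x\|^2=m+l$, we have $d(x,y)^2=2(m+l)-2(x,y)$, so $D(L_{mkl})$ is realized exactly by the pairs minimizing the inner product $(x,y)$. Writing $a=|P(x)\cap P(y)|$, $b=|\{i\mid x_i=1,\,y_i=-1\}|$, $c=|\{i\mid x_i=-1,\,y_i=1\}|$, and $e=|\{i\mid x_i=y_i=-1\}|$, one has $(x,y)=a-b-c+e$. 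From $|P(x)|+|P(y)|=2l$ and $|P(x)\cup P(y)|\le n$ I obtain the forced overlap $a\ge 2l-n=l-m-k$, while $b\le m$, $c\le m$ (each vector has only $m$ entries $-1$) and $e\ge 0$. Hence
\[
(x,y)\ge (l-m-k)-m-m = l-3m-k,\qquad D(L_{mkl})=\sqrt{8m+2k}.
\]
Equality forces $a=2l-n$, i.e. $P(x)\cup P(y)=I_n$, equivalently $Q(x)\cap Q(y)=\emptyset$; conversely, if $Q(x)\cap Q(y)=\emptyset$ then each $-1$ or $0$ of one vector meets a $+1$ of the other, which makes all four inequalities tight. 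Thus two vectors lie at distance $D(L_{mkl})$ if and only if $Q(x)\cap Q(y)=\emptyset$.

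The key observation is that this adjacency depends only on the $(m+k)$-subset $Q(x)$, not on how its entries split into $m$ copies of $-1$ and $k$ copies of $0$. Hence the diameter graph of $L_{mkl}$ is the Kneser graph on $(m+k)$-subsets of $I_n$ (adjacency $=$ disjointness) with each vertex blown up into the $\binom{m+k}{m}$ vectors sharing that $Q$-set. Consequently an independent set is obtained by choosing an intersecting family $\mathcal F$ of $(m+k)$-subsets together with, for each member, an arbitrary set of labelings; as the only constraint is that the $Q$-sets actually used form an intersecting family, a largest independent set takes all $\binom{m+k}{m}$ labelings of each member of a largest intersecting family. Therefore
\[
M_{mkl} = \binom{m+k}{m}\cdot \max\bigl\{\,|\mathcal F| \;\big|\; \mathcal F \text{ an intersecting family of } (m+k)\text{-subsets of } I_n\,\bigr\}.
\]

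Finally I would invoke the Erd\H{o}s--Ko--Rado theorem. The hypothesis $m+k\le l$ is exactly $n=m+k+l\ge 2(m+k)$, so EKR gives $\max|\mathcal F|=\binom{n-1}{m+k-1}$ and hence the stated value of $M_{mkl}$. The classification of extremal sets then transfers through the blow-up, since every largest independent set consists of all labelings of a largest intersecting family. For $m+k<l$ (i.e. $n>2(m+k)$) the unique largest intersecting family up to permutation is the star $\{Q\mid 1\in Q\}$, whose vectors are precisely those with $x_1\in\{-1,0\}$, giving $N_1(L_{mkl},-1)\cup N_1(L_{mkl},0)$; a direct count confirms its cardinality is $\binom{n-1}{m+k-1}\binom{m+k}{m}$. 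For $m+k=l$ (i.e. $n=2(m+k)$) the largest intersecting families are those selecting exactly one set from each complementary pair $\{J,I_n\setminus J\}$; since here $|J|=|I_n\setminus J|=l$, choosing $Q(x)=I_n\setminus J$ is the same as fixing $P(x)=J$, which yields the description in the statement.

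I expect the delicate point to be the equality analysis in the diameter bound, namely verifying that $Q(x)\cap Q(y)=\emptyset$ is not merely necessary but also sufficient for minimality, so that the diameter graph is cleanly the blown-up Kneser graph; once this is in hand, checking that the blow-up faithfully transfers EKR's extremal classification (that a maximum independent set must use all labelings of a maximum intersecting family) is routine. I also note that the case labelled ``$m+k>l$'' in the statement should read $m+k<l$, in keeping with the standing assumption $m+k\le l$.
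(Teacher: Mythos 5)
Your proposal is correct and takes essentially the same route as the paper: both reduce the problem to the Erd\H{o}s--Ko--Rado theorem by characterizing the diameter pairs of $L_{mkl}$ as those whose $\{-1,0\}$-position sets are disjoint, applying EKR to these $(m+k)$-sets (using $n\geq 2(m+k)$), and multiplying by the $\binom{m+k}{m}$ placements of the $-1$'s, with the extremal classification transferring from the EKR extremal families; your inner-product computation simply supplies the verification of the disjointness criterion that the paper asserts without proof. You are also right that the case labelled $m+k>l$ in the statement should read $m+k<l$ (and, relatedly, the paper's own proof writes $\cup$ where $\cap$ is meant in the intersection condition).
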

\begin{proof}
A finite subset $X$ of $L_{mkl}$ satisfies 
$D(X)<D(L_{mkl})$ if and only if $\{i \mid x_i=-1,0\} \cup \{i \mid y_i=-1,0\}$ is not empty for any distinct $(x_1,\ldots,x_n),(y_1,\ldots, y_n) \in X$. 
We can therefore apply the Erd\H{o}s--Ko--Rado theorem \cite{EKR61}
to determine the positions of entries $-1$ or $0$.  The number of possible positions of $-1$, $0$ is $\binom{n-1}{m+k-1}$. After fixing the position,  $-1$, $0$ can be placed in $\binom{m+k}{k}$ ways.  
 This determines $M_{mkl}$.  The largest sets are classified from the optimal sets of the Erd\H{o}s--Ko--Rado theorem. 
\end{proof}
The remaining part of this section is devoted to 
proving
\[ 
M_{1k2} = \mathfrak{M}_k=\binom{k+3}{3}+2,
\] 
and determining the classification of the largest sets. Note that $D(L_{1k2})=\sqrt{10}$ and if $X\subset L_{1k2}$ satisfies $D(X)<D(L_{1k2})$, then $D(X) \leq \sqrt{8}$. 
The following two lemmas are used later. 
\begin{lemma} \label{lem:M_k}
Let $X\subset L_{1k2}$ with $D(X)< D(L_{1k2})$. 
Suppose $k\geq 4$, and $|X| \geq \mathfrak{M}_k$. 
Then there exists $i \in \{1,\ldots,n\}$ such that
$n_i(X,0)\geq \mathfrak{M}_{k-1}$. 
\end{lemma}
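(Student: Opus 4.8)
The plan is to prove this by a straightforward double-counting (averaging) argument on the zero entries, so that in fact the diameter hypothesis $D(X)<D(L_{1k2})$ will play no role; only $|X|\geq\mathfrak{M}_k$ and $k\geq 4$ will be used. First I would record that $n=m+k+l=k+3$ and that every vector of $L_{1k2}$ has exactly $k$ zero coordinates. Summing $n_i(X,0)$ over all coordinates therefore counts each $x\in X$ exactly $k$ times, giving
\[
\sum_{i=1}^{n} n_i(X,0) = k\,|X|.
\]

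By the pigeonhole principle there is a coordinate $i$ with $n_i(X,0)\geq k|X|/n = k|X|/(k+3)$. Using the hypothesis $|X|\geq\mathfrak{M}_k=\binom{k+3}{3}+2$ together with the identity $\tfrac{k}{k+3}\binom{k+3}{3}=\binom{k+2}{3}$, this lower bound simplifies to
\[
n_i(X,0)\ \geq\ \binom{k+2}{3}+\frac{2k}{k+3}.
\]

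Finally I would invoke integrality. For $k\geq 4$ one has $1<\tfrac{2k}{k+3}<2$ (indeed $\tfrac{2k}{k+3}=2-\tfrac{6}{k+3}$ is increasing and equals $\tfrac87$ at $k=4$), so the right-hand side lies strictly between the consecutive integers $\binom{k+2}{3}+1$ and $\binom{k+2}{3}+2$. Since $n_i(X,0)$ is an integer strictly exceeding $\binom{k+2}{3}+1$, it must be at least $\binom{k+2}{3}+2=\mathfrak{M}_{k-1}$, which is exactly the claim.

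The argument has essentially no obstacle: the one point requiring care is the closing integrality step, and this is precisely where the assumption $k\geq 4$ enters, since $\tfrac{2k}{k+3}>1$ fails for $k\leq 3$. For such small $k$ pure averaging would yield only $n_i(X,0)\geq\binom{k+2}{3}+1$, one short of the target, so any treatment of $k\leq 3$ (not needed here) would demand additional input beyond the counting bound.
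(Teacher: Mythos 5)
Your proof is correct and is essentially identical to the paper's: both apply the same averaging/pigeonhole computation $\sum_i n_i(X,0)=k|X|$ together with the identity $\frac{k}{k+3}\binom{k+3}{3}=\binom{k+2}{3}$, and conclude by integrality, the paper writing the bound as $\mathfrak{M}_{k-1}-\frac{6}{k+3}>\mathfrak{M}_{k-1}-1$, which is the same quantity as your $\binom{k+2}{3}+\frac{2k}{k+3}$. Your added observations---that the diameter hypothesis is unused and that $k\geq 4$ is exactly what makes the integrality step close---are accurate glosses on the paper's one-line argument.
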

\begin{proof}
This lemma is immediate because the average of 
$n_i(X,0)$ is 
\[
\frac{1}{n}\sum_{i=1}^{n}n_{i}(X,0)=\frac{k|X|}{k+3}
\geq \frac{k\mathfrak{M}_k}{k+3}
=\mathfrak{M}_{k-1}-\frac{6}{k+3}
>\mathfrak{M}_{k-1}-1. \qedhere
\]
\end{proof}
\begin{lemma} \label{lem:match}
Let $G=(V,E)$ be a connected simple graph, 
and $E'$ a matching in $G$. 
Assume that $G$ has an independent set $I$ of size 
$|V|-|E'|$.   
Then for $z\in I$ if $x \in V$ satisfies  
 $(x,y)\in E'$ for some $y$ adjacent to $z$, 
then $x \in I$. 
\end{lemma}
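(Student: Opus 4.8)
The plan is to read the hypothesis $|I| = |V| - |E'|$ as an extremal condition and to extract structural information from it. Write $m = |E'|$. Since $E'$ is a matching, its edges are pairwise vertex-disjoint and cover exactly $2m$ vertices, leaving $|V| - 2m$ vertices uncovered. Because the two endpoints of any edge of $E'$ are adjacent, the independent set $I$ can contain at most one endpoint of each matching edge; together with the at most $|V| - 2m$ uncovered vertices this yields the trivial bound
\[
|I| \le m + (|V| - 2m) = |V| - m = |V| - |E'|.
\]

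The decisive step is to observe that the hypothesis forces equality throughout this estimate. Equality can hold only if $I$ contains every uncovered vertex and \emph{exactly one} endpoint of each edge of $E'$. This structural description is really the whole content of the lemma; once it is available, the stated conclusion follows in a single line.

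To finish, take $z \in I$ and suppose $x \in V$ satisfies $(x,y) \in E'$ for some $y$ adjacent to $z$. By the structure above, the matching edge $(x,y)$ contributes exactly one of its endpoints to $I$. Since $y$ is adjacent to $z \in I$ and $I$ is independent, we must have $y \notin I$; hence the endpoint of $(x,y)$ lying in $I$ is $x$, i.e. $x \in I$, as required.

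I do not anticipate a genuine obstacle: the argument rests entirely on the equality case of an essentially one-line counting bound and never uses connectedness of $G$. The only point that needs a moment's care is the exclusion $y \notin I$, which requires $y \ne z$; this is guaranteed because $G$ is \emph{simple} and $y$ is adjacent to $z$, so that $y$ and $z$ are genuinely distinct vertices joined by an edge. Thus the hypothesis that $G$ is simple is used, while connectedness appears to be inessential for this particular statement.
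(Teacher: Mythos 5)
Your proof is correct and is essentially the paper's own argument: the paper's proof asserts in one line that $|I| = |V| - |E'|$ forces exactly one endpoint of each matching edge to lie in $I$, and then concludes $y \notin I$, $x \in I$ exactly as you do. You merely make explicit the counting bound $|I| \le |V| - |E'|$ and its equality case that the paper leaves implicit, and your side observations (connectedness is unused; simplicity guarantees $y \ne z$) are accurate.
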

\begin{proof}
Since the cardinality of $I$ is $|V|-|E'|$, only one of $x$ or $y$ is an element of $I$ for any $(x,y) \in E'$.
By assumption, $y \not \in I$, and hence $x \in I$.  
\end{proof}
The subsets $S_k(i)$, $T_k(i)$, $U_k(i)$ of $L_{1k2}$ are defined by 
\begin{align*}
S_k(i)&=\{(x_1,\ldots,x_n) \in L_{1k2}\mid x_1=\cdots=x_{i-1}=0, x_i=-1\}, \\
T_k(i)&=\{(x_1,\ldots,x_n) \in L_{1k2}\mid x_1=\cdots=x_{i-1}=0, x_i=1\}, \\
U_k(i)&=\{(x_1,\ldots,x_n) \in L_{1k2} \mid 
x_1=1, x_l=-1, x_j=1, 
\exists l
\in \{2,\ldots, i\},  
\exists j \in \{l+1,\ldots,n\}\} 
\end{align*}
for $i=2,\ldots,k+2$. 
We define $S_k(1)=N_1(L_{1k2},-1)$, and 
$T_k(1)=N_1(L_{1k2},1)$. 
The following are candidates of the largest subsets avoiding the largest distance $\sqrt{10}$. 
\begin{align*}
&X_k=T_k(k+1)\cup (\bigcup_{i=1}^{k+1} S_k(i)) \text{ for } k\geq 1, \\
&Y_1=T_1(1), \qquad Y_k=T_k(k)\cup (\bigcup_{i=1}^{k-1} S_k(i)) \text{ for } k\geq 2, \\
&Z_2=T_2(1), \qquad Z_k=T_k(k-1)\cup( \bigcup_{i=1}^{k-2} S_k(i)) \text{ for } k\geq 3.
\end{align*}
Note that $|X_k|=|Y_k|=|Z_k|=\mathfrak{M}_k$, and they can be inductively constructed by 
\begin{align*}
X_k&=\{(0,x)\mid x \in X_{k-1}\} \cup N_1(L_{1k2},-1), \\ 
Y_k&=\{(0,x)\mid x \in Y_{k-1}\} \cup N_1(L_{1k2},-1), \\ 
Z_k&=\{(0,x)\mid x \in Z_{k-1}\} \cup N_1(L_{1k2},-1). 
\end{align*}
We also use the following notation.
\begin{align*}
X_k'&=X_k\setminus S_k(1)\quad (k\geq 2)& \qquad 
Y_k'&=Y_k\setminus S_k(1)\quad (k\geq 2)& \qquad 
Z_k'&=Z_k\setminus S_k(1)\quad (k\geq 3). \\
 &=\{(0,x)\mid x \in X_{k-1}\}, & &=\{(0,x)\mid x \in Y_{k-1}\}, & &=\{(0,x)\mid x \in Z_{k-1}\}.
\end{align*}
\begin{theorem} \label{thm:main}
Let $X\subset L_{1k2}$ with $D(X)<D(L_{1k2})$. Then we have
\[
|X| \leq \mathfrak{M}_k. 
\]
If equality holds, then 
\begin{enumerate}
\item for $k=1$, $X=X_1$, or $Y_1$,
\item for $k\geq 2$, $X=X_k$, $Y_k$, or $Z_k$,
\end{enumerate}
up to isomorphism. 
\end{theorem}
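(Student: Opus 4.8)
The plan is to pass to the inner-product picture and then run an induction on $k$ that peels off one coordinate, mirroring the way $X_k,Y_k,Z_k$ are built inductively. Every vector of $L_{1k2}$ has squared norm $3$, so for distinct $x,y$ we have $d(x,y)^2=6-2(x,y)$ with $(x,y)\in\{-2,-1,0,1,2\}$; hence $D(X)<\sqrt{10}$ is exactly the condition that no two members of $X$ have $(x,y)=-2$. Recording each vector by the position of its $-1$ and of its two $1$'s, a short computation shows that two vectors are adjacent in the diameter graph iff the $-1$ of one sits on a $1$ of the other and vice versa, while the two remaining $1$'s occupy distinct positions. From this I extract the three facts that drive the induction, stated for a fixed coordinate (which I will take to be the first by symmetry): vectors sharing the position of their $-1$ are pairwise non-adjacent; vectors both carrying $1$ in coordinate $1$ are pairwise non-adjacent; and a vector with $x_1=-1$ is never adjacent to one with $y_1=0$.

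For the induction I would dispose of $k=1,2,3$ directly (these are finite, and the averaging Lemma~\ref{lem:M_k} is unavailable there since its hypothesis $k\ge 4$ is exactly the threshold at which $k\mathfrak{M}_k/(k+3)>\mathfrak{M}_{k-1}-1$). For $k\ge 4$, assume the theorem for $k-1$ and take a valid $X$ with $|X|\ge\mathfrak{M}_k$. By Lemma~\ref{lem:M_k} some coordinate, say the first, satisfies $n_1(X,0)\ge\mathfrak{M}_{k-1}$. Deleting the first coordinate from the members of $N_1(X,0)$ is injective and preserves inner products, so it realizes $N_1(X,0)$ as a valid subset of $L_{1(k-1)2}$; the induction hypothesis forces $n_1(X,0)=\mathfrak{M}_{k-1}$ and identifies the $0$-slice $B:=N_1(X,0)$, after deleting coordinate $1$, with one of $X_{k-1},Y_{k-1},Z_{k-1}$ up to isomorphism.

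Writing $A=N_1(X,-1)$ and $C=N_1(X,1)$, the three facts above say that $A$ and $C$ are each internally independent and that $A$ is non-adjacent to $B$, so the only live constraints are the edges between $C$ and $A$ and between $C$ and $B$. Since $\mathfrak{M}_k-\mathfrak{M}_{k-1}=\binom{k+2}{2}=|N_1(L_{1k2},-1)|$, the whole theorem reduces to proving $|A|+|C|\le\binom{k+2}{2}$ together with its equality analysis. Put $P=N_1(L_{1k2},-1)$, the set of all candidates for $A$, and for $w\in C$ let $N(w)\subseteq P$ be its neighborhood; a direct count gives $|N(w)|=k$. Because $A\subseteq P\setminus\bigcup_{w\in C}N(w)$, the estimate $|A|+|C|\le|P|$ follows once Hall's condition $|C'|\le|\bigcup_{w\in C'}N(w)|$ holds for every $C'\subseteq C$, i.e.\ once the bipartite graph $C\to P$ admits a matching saturating $C$. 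Without any restriction on $C$ this fails badly (the full set of $w$ with $w_1=1$ has size $(k+2)(k+1)>|P|$), so the decisive input is that $C$ must be non-adjacent to the extremal slice $B$; I would extract the saturating matching explicitly for each of the three shapes of $B$ and use Lemma~\ref{lem:match} to rigidify the resulting extremal independent set.

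The main obstacle is precisely this final step: verifying Hall's condition against each extremal $B$ and then classifying the equality cases. The structural feature to exploit is that all three targets satisfy $A=P$ and $C=\emptyset$; indeed $X_k,Y_k,Z_k$ arise from $X_{k-1},Y_{k-1},Z_{k-1}$ by prepending a zero coordinate and adjoining $N_1(L_{1k2},-1)$. So the equality analysis should show that $|A|+|C|=\binom{k+2}{2}$ forces $C=\emptyset$ and $A=P$: the $B$-non-adjacency constraint, combined with $|N(w)|=k\ge 4$, should make any nonempty admissible $C$ block strictly more members of $P$ than $|C|$, preventing equality. Once $C=\emptyset$ and $A=P$ are established, $X=\{(0,x)\mid x\in B\}\cup N_1(L_{1k2},-1)$ with $B$ isomorphic to $X_{k-1},Y_{k-1},Z_{k-1}$ reconstructs exactly $X_k,Y_k,Z_k$ up to isomorphism, closing the induction; Lemma~\ref{lem:match} is what rules out any spurious extremal slice beyond these three.
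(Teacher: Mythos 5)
Your overall skeleton --- base cases $k\le 3$, the averaging Lemma~\ref{lem:M_k}, slicing along a coordinate with $n_i(X,0)=\mathfrak{M}_{k-1}$, and a bipartite matching between the candidates for $C$ and $P=N_1(L_{1k2},-1)$, rigidified by Lemma~\ref{lem:match} --- is exactly the skeleton of the paper's Lemma~\ref{lem:induc}. But your equality analysis rests on a claim that is false: that $|A|+|C|=\binom{k+2}{2}$ forces $C=\emptyset$ and $A=P$. The counterexample is $X_k$ itself. One checks
\[
n_2(X_k,0)=2+\binom{k+1}{2}+\sum_{i=3}^{k+1}\binom{k+3-i}{2}=2+\binom{k+2}{3}=\mathfrak{M}_{k-1},
\]
so coordinate $2$ is a legitimate slicing coordinate delivered by Lemma~\ref{lem:M_k}; relative to it, $A=N_2(X_k,-1)=S_k(2)$ has size $\binom{k+1}{2}<\binom{k+2}{2}$, while $C=N_2(X_k,1)$ consists of the $k+1$ vectors of $S_k(1)$ with $x_2=1$, so $C\neq\emptyset$ and yet $|A|+|C|=\binom{k+1}{2}+(k+1)=\binom{k+2}{2}$ exactly. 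Moreover $\bigcup_{w\in C}N(w)$ is precisely the set of $x\in P$ with a $1$ in the position carrying the $-1$ of the $w$'s, of size exactly $k+1=|C|$, so Hall's condition holds with equality and your hoped-for strictness (``any nonempty admissible $C$ blocks strictly more members of $P$ than $|C|$'') fails. Since Lemma~\ref{lem:M_k} only guarantees \emph{some} coordinate with a large $0$-slice, you are not entitled to assume it is the distinguished coordinate of the target configuration, and your reconstruction step would wrongly exclude genuine extremal sets.

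The paper's proof of Lemma~\ref{lem:induc} shows what is actually needed: after the matching bound, the classification splits into a dichotomy. If $X$ contains some $x\in S_k(1)$ with $x_2=1$, then Lemma~\ref{lem:match} propagates along the explicit matchings to give $S_k(1)\subset X$, which is your $C=\emptyset$ conclusion, and $X=X_k$ (resp.\ $Y_k$, $Z_k$). Otherwise $n_2(X,1)=0$, and one counts $|X|=n_2(X,-1)+n_2(X,0)\le\binom{k+2}{2}+\mathfrak{M}_{k-1}=\mathfrak{M}_k$; equality forces \emph{every} vector with $x_2=-1$ into $X$ --- including vectors with $x_1=1$, i.e.\ $C\neq\emptyset$ --- and identifies $X$ with $X_k$, $Y_k$, or $Z_k$ with coordinate $2$ now playing the distinguished role. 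This second branch is entirely absent from your plan, and without it the induction does not close. A secondary gap: ``dispose of $k=1,2,3$ directly'' is not as routine as it sounds, since the paper's $k=3$ argument requires classifying all $(\mathfrak{M}_2-1)$-point (i.e.\ $11$-point) sets for $k=2$ (Proposition~\ref{prop:k2}), because an extremal $22$-point set at $k=3$ with all $n_i(X,0)\le 11$ must have every slice exactly $11$; a finite computer search could substitute, but as written your base case is underspecified.
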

This theorem will be proved by induction. 
 We first prove the inductive step. 
\begin{lemma}\label{lem:induc}
Let $k\geq 2$.  Assume that the statement in Theorem~\ref{thm:main}
holds for some $k-1$. Let $X \subset L_{1k2}$ with $D(X)<D(L_{1k2})$, such that $n_i(X,0)=\mathfrak{M}_{k-1}$ for some $i$.
Then we have $|X| \leq \mathfrak{M}_{k}$. 
If equality holds, then  $X=X_{k}$, $Y_{k}$, or $Z_{k}$, up to isomorphism. 
\end{lemma}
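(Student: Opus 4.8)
The plan is to fix the coordinate $i$ with $n_i(X,0)=\mathfrak{M}_{k-1}$ — which, after a coordinate permutation, I may take to be $i=1$ — and to split $X$ according to the value of the first entry. Write $A=N_1(X,0)$, $B=N_1(X,-1)$, $C=N_1(X,1)$, so that $|X|=|A|+|B|+|C|$ with $|A|=\mathfrak{M}_{k-1}$. Deleting the first coordinate carries $A$ bijectively onto a set $A'\subset L_{1(k-1)2}$ living on $\{2,\ldots,n\}$; since the first entry is $0$ throughout $A$, two elements of $A$ realise the distance $D(L_{1k2})$ exactly when their images realise $D(L_{1(k-1)2})$, so $D(A')<D(L_{1(k-1)2})$ and $|A'|=\mathfrak{M}_{k-1}$. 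By the induction hypothesis $A'$ is isomorphic to $X_{k-1}$, $Y_{k-1}$, or $Z_{k-1}$, and after a further permutation of $\{2,\ldots,n\}$ (which fixes the splitting $A,B,C$) I may assume $A'$ is one of these three explicit sets. Since $|A|$ is now pinned to $\mathfrak{M}_{k-1}$ and $\mathfrak{M}_k-\mathfrak{M}_{k-1}=\binom{k+2}{2}=|N_1(L_{1k2},-1)|$, the statement reduces to proving $|B|+|C|\le\binom{k+2}{2}$ together with the equality analysis.

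Next I would record the diameter adjacencies among the three blocks through the standard inner product $(x,y)=\sum_i x_iy_i$, using that $x,y\in L_{1k2}$ lie at distance $D(L_{1k2})$ iff $(x,y)=-2$. A direct computation shows that no two elements of $B$, no two of $C$, and no $A$--$B$ pair can be adjacent (their inner products are always at least $-1$). Hence the only diameter edges inside $B\cup C$ run between $B$ and $C$, so the diameter graph on $N_1(L_{1k2},-1)\cup N_1(L_{1k2},1)$ is bipartite. Writing a $C$-point as $(1\mid c)$ with its $+1$ in position $p$ and its $-1$ in position $q$, one finds it is adjacent to an $A$-point $(0\mid a)$ iff some $a\in A$ has $a_p=-1$ and $a_q=1$, and adjacent to a $B$-point $(-1\mid b)$ iff $\mathrm{supp}(b)=\{q,r\}$ for some $r\ne p$; in particular each admissible $C$-point has exactly $k$ neighbours in $N_1(L_{1k2},-1)$. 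Let $C^{*}$ be the set of $C$-points adjacent to no element of $A$; then $C\subseteq C^{*}$ and $B\cup C$ is an independent set of the bipartite graph $H$ on $N_1(L_{1k2},-1)\sqcup C^{*}$.

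For the upper bound I would use the elementary inequality $\alpha(H)\le |V(H)|-\nu(H)$ (an independent set omits one endpoint of every matching edge), which gives $|B|+|C|\le |N_1(L_{1k2},-1)|+|C^{*}|-\nu(H)$; thus it suffices to produce a matching of $H$ saturating $C^{*}$, i.e. to verify Hall's condition for $C^{*}$. This is the main obstacle. One must compute $C^{*}$ and its neighbourhoods explicitly for each of the three shapes of $A'$ and exhibit a system of distinct representatives $c\mapsto b$ with $\mathrm{supp}(b)=\{q,r\}$. The structure of $A'$ is exactly what makes this work: the families $S_{k-1}(i)$ inside $A'$ supply, for essentially every $p<q$, an element $a$ with $a_p=-1$ and $a_q=1$, which deletes from $C^{*}$ all $C$-points whose $+1$ lies to the left of their $-1$; the surviving points have their $-1$ on the left, and for these the $k$ candidate representatives $\{q,r\}$ are plentiful enough (as $k\ge2$) to satisfy Hall, while the minor discrepancies coming from the $T_{k-1}(\cdot)$ tails of $Y_{k-1},Z_{k-1}$ must be checked separately. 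Once the saturating matching exists, $\nu(H)\ge|C^{*}|$ yields $|B|+|C|\le\binom{k+2}{2}$, hence $|X|\le\mathfrak{M}_k$.

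Finally, for equality, $B\cup C$ must be a maximum independent set of $H$ and the matching must saturate $C^{*}$. The clean solution is $C=\emptyset$, $B=N_1(L_{1k2},-1)$, for which $X=\{(0,x)\mid x\in A'\}\cup N_1(L_{1k2},-1)$ equals $X_k$, $Y_k$, or $Z_k$ directly from the inductive construction of these sets. The remaining work is to show that every other maximum independent set yields a set isomorphic to one of these. Here I would combine König duality for the bipartite graph $H$ with Lemma~\ref{lem:match} to propagate membership off the saturating matching: knowing that $A$ and the values forced on one side are present pins down the rest. I expect the configurations with $C\ne\emptyset$ to be precisely those encoded by the families $U_k(i)$, each carried onto $X_k$, $Y_k$, or $Z_k$ by a coordinate permutation that need not fix the first coordinate. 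Enumerating these maximum independent sets and verifying the isomorphisms case by case across the three shapes of $A'$ is the most laborious part of the argument.
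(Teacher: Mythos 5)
Your framework is exactly the paper's: fix the special coordinate, apply the induction hypothesis to $A'=N_1(X,0)$ restricted to $\{2,\ldots,n\}$ to pin it to $X_{k-1}$, $Y_{k-1}$, or $Z_{k-1}$; observe via inner products (diameter iff $(x,y)=-2$) that the only edges among the remaining candidates run between $N_1(L_{1k2},-1)$ and the admissible $C$-points; bound the independent set by $\alpha\le|V|-\nu$ with a matching saturating the $C$-side; and classify equality with Lemma~\ref{lem:match}. Your adjacency computations (no $B$--$B$, $C$--$C$, or $A$--$B$ edges; a $C$-point with $+1$ at $p$, $-1$ at $q$ has the $k$ neighbours $\mathrm{supp}(b)=\{q,r\}$, $r\ne p$) are correct and match what the paper uses implicitly.

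However, the proposal has a genuine gap: everything that constitutes the actual content of the proof is deferred. You write that one ``must compute $C^{*}$ and its neighbourhoods explicitly for each of the three shapes of $A'$'' and that Hall's condition ``must be checked,'' but you never do it; the paper's proof consists precisely of these computations, namely the identification of the candidate sets as $S_k(1)\cup U_k(k)$, $S_k(1)\cup U_k(k-1)\cup\mathcal{S}_1$, and $S_k(1)\cup U_k(k-2)\cup\mathcal{S}_2$ in the three cases, the isolated vertices in each, and the explicit Matchings (i)--(iii). Your sketch also misses a structural feature that your uniform ``saturate $C^{*}$, then propagate by Lemma~\ref{lem:match}'' plan would stumble on: in the $Z_{k-1}$ case the maximum matching is not perfect on the $B$-side --- the two vectors in \eqref{eq:1} are unmatched --- and the equality analysis hinges on this: $|X|=\mathfrak{M}_k$ forces those two vectors into $X$, which excludes all of $\mathcal{S}_2$ and then forces the matched $S_k(1)$-partners in. Finally, your expectation that the equality configurations with $C\ne\emptyset$ are ``enumerated'' as independent sets built from $U_k(i)$ is not how the degenerate branch resolves: when no $x\in S_k(1)$ with $x_2=1$ lies in $X$, the paper does not enumerate independent sets but counts on the second coordinate, $|X|=n_2(X,-1)+n_2(X,0)\le\binom{k+2}{2}+\mathfrak{M}_{k-1}$, and equality re-creates the inductive construction with coordinate $2$ playing the special role, giving $X\cong X_k$, $Y_k$, or $Z_k$. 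Without these verifications your argument is a plausible plan, not a proof.
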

\begin{proof}
Without loss of generality, 
$n_1(X,0)=\mathfrak{M}_{k-1}$, and hence 
$X$ contains $X_k'$, $Y_k'$, or $Z_k'$ for $k\geq 3$, and $X_1'$, or $Y_1'$ for $k=2$.

(i) Suppose $X_k' \subset X$ for $k\geq 2$. 
The set of other candidates of elements of $X$
is $S_k(1)\cup U_k(k)$. 
The diameter graph $G$ of $S_k(1)\cup U_k(k)$
is a bipartite graph of the partite sets $S_k(1)$ and $U_k(k)$. Since the three elements 
\[(-1,0,\ldots,0,0,1,1),(-1,0,\ldots,0,1,0,1),
(-1,0,\ldots,0,1,1,0)\in S_k(1)\]
are isolated vertices in $G$, they may be contained in $X$. Let $G'$ be the 
subgraph of $G$ formed by removing the three isolated vertices. A perfect matching of $G'$ is given as follows.
\begin{center}
Matching (i)\\
\begin{tabular}{|c|c|}
\hline
$S_k(1)$ &$U_k(k)$ \\ 
$(-1,x_2, \ldots, x_{k+3})$ &  $(1,y_2,\ldots,y_{k+3})$ \\ \hline
$x_i=1,x_j=1$ ($2\leq i\leq k, i<j<n $)& $y_i=-1,y_{j+1}=1 $ \\
$x_i=1,x_n=1$ ($2\leq i\leq k $)& $y_i=-1,y_{i+1}=1 $ \\
\hline
\end{tabular}
\end{center}
By this matching, we can show
\[
|X| \leq \mathfrak{M}_{k-1}+|S_k(1)|=\mathfrak{M}_k.
\]

We will classify the sets attaining this bound. 
First assume that $x \in X$ for some $x\in S_k(1)$ with $x_2=1$. 
By Lemma~\ref{lem:match}, $X$ must contain any $x \in S_k(1)$ with $x_2=1$. In particular, $(-1,1,1,0,\ldots,0) \in X$. Using Lemma~\ref{lem:match} again, $X$ must contain $x \in S_k(1)$ with $x_3=1$. By a similar manner, $X$ must contain any $x \in S_k(1)$. Therefore $X=X_k$. 

Assume $X$ does not contain any $x\in S_k(1)$ with $x_2=1$, namely $n_2(X,1)=0$. 
By assumption, we have 
\[
|X|=n_2(X,-1)+n_2(X,0)\leq \binom{k+2}{2}+\mathfrak{M}_{k-1}=\mathfrak{M}_k. 
\]
If $|X|=\mathfrak{M}_k$, then we have 
$n_2(X,-1)=\binom{k+2}{2}$ and $n_2(X,0)=\mathfrak{M}_{k-1}$. This implies that $X$ is isomorphic to 
$X_k$, $Y_k$, or $Z_k$. 

(ii) Suppose $Y_k' \subset X$ for $k\geq 2$. 
The set of other candidates of elements of $X$
is the union of $S_k(1)$, $U_k(k-1)$, and 
\[
\mathcal{S}_1=\{(x_1,\ldots, x_{k+3}) \in 
L_{1k2} \mid x_1=1, x_k=1,x_j=-1, k< j \}
\]
for $k\geq 3$, 
and $S_2(1)\cup \mathcal{S}_1$ for
$k=2$.  
The diameter graph $G$ of $S_k(1)\cup U_k(k-1)\cup \mathcal{S}_1$
is a bipartite graph of the partite sets $S_k(1)$ and $U_k(k-1)\cup \mathcal{S}_1$. Since the three elements 
\[(-1,0,\ldots,0,1,1,0,0),(-1,0,\ldots,0,1,0,1,0),
(-1,0,\ldots,0,1,0,0,1)\in S_k(1)\]
are isolated vertices in $G$, they may be contained in $X$. Let $G'$ be the 
subgraph of $G$ formed by removing the three isolated vertices. A perfect matching of $G'$ is given as follows.
\begin{center}
Matching (ii)\\
\begin{tabular}{|c|c|}
\hline
$S_k(1)$ &$U_k(k-1)$ \\ 
$(-1,x_2, \ldots, x_{k+3})$ &  $(1,y_2,\ldots,y_{k+3})$ \\ \hline
$x_i=1,x_j=1$ ($2\leq i\leq k-1, i<j<n $)& $y_i=-1,y_{j+1}=1 $ \\
$x_i=1,x_n=1  $ ($2\leq i\leq k-1 $)& $y_i=-1,y_{i+1}=1 $ \\
\hline
\end{tabular}

\begin{tabular}{|c|c|}
\hline
$S_k(1)$ &$\mathcal{S}_1$ \\ 
 \hline
$(-1,0,\ldots,0,1,1,0)$& $(1,0,\ldots,0,1,-1,0,0)$ \\
$(-1,0,\ldots,0,0,1,1)$& $(1,0,\ldots,0,1,0,-1,0)$ \\
$(-1,0,\ldots,0,1,0,1)$& $(1,0,\ldots,0,1,0,0,-1)$ \\
\hline
\end{tabular}
\end{center}
By this maching, we can show
$
|X| \leq \mathfrak{M}_k.
$

We will classify the sets attaining this bound. 
For $k=2$, the maximum indepdent sets of 
$G'$ is $\{(-1,0,0,1,1),(-1,0,1,0,1),(-1,0,1,1,0)\} \subset S_2(1)$ or $\mathcal{S}_1$. This implies that $X=Y_2$ or $Z_2$. 
For $k\geq 3$, we assume that $x \in X$ for some $x\in S_k(1)$ with $x_2=1$. 
By Lemma~\ref{lem:match}, $X$ must contain any $x \in S_k(1)$. Therefore $X=Y_k$. 
If $X$ does not contain any $x\in S_k(1)$ with $x_2=1$, namely $n_2(X,1)=0$. 
It can be proved that $X$ is isomorphic to 
$X_k$, $Y_k$, or $Z_k$. 

(iii) Suppose $k\geq 3$, and $Z_k' \subset X$. 
The set of other candidates of elements of $X$
is the union of $S_k(1)$, $U_k(k-2)$, and 
\[
\mathcal{S}_2=\{(x_1,\ldots, x_{k+3}) \in L_{1k2} \mid x_1=1, x_{k-1}=1,x_j=-1, k< j \}
\]
for $k\geq 4$, 
and $S_3(1)\cup \mathcal{S}_2$ for
$k=3$.  
The diameter graph $G$ of $S_k(1)\cup U_k(k-2)\cup \mathcal{S}_2$
is a bipartite graph of the partite sets $S_k(1)$ and $U_k(k-2)\cup \mathcal{S}_2$. Since the four vectors 
\begin{align*}
&(-1,0,\ldots,0,1,1,0,0,0),(-1,0,\ldots,0,1,0,1,0,0),\\
&(-1,0,\ldots,0,1,0,0,1,0),(-1,0,\ldots,0,1,0,0,0,1)\in S_k(1)
\end{align*}
are isolated vertices in $G$, they may be contained in $X$. Let $G'$ be the 
subgraph of $G$ formed by removing the four isolated vertices. A maximum matching of $G'$ is given as follows.
\begin{center}
Matching (iii)\\
\begin{tabular}{|c|c|}
\hline
$S_k(1)$ &$U_k(k-2)$ \\ 
$(-1,x_2, \ldots, x_{k+3})$ &  $(1,y_2,\ldots,y_{k+3})$ \\ \hline
$x_i=1,x_j=1$ ($2\leq i\leq k-2, i<j<n $)& $y_i=-1,y_{j+1}=1 $ \\
$x_i=1,x_n=1  $ ($2\leq i\leq k-2 $)& $y_i=-1,y_{i+1}=1 $ \\
\hline
\end{tabular}

\begin{tabular}{|c|c|}
\hline
$S_k(1)$ &$\mathcal{S}_2$ \\
 \hline 
$(-1,0,\ldots,0,1,1,0,0)$& $(1,0,\ldots,0,1,-1,0,0,0)$ \\
$(-1,0,\ldots,0,0,1,1,0)$& $(1,0,\ldots,0,1,0,-1,0,0)$ \\
$(-1,0,\ldots,0,0,0,1,1)$& $(1,0,\ldots,0,1,0,0,-1,0)$ \\
$(-1,0,\ldots,0,1,0,0,1)$& $(1,0,\ldots,0,1,0,0,0,-1)$ \\
\hline
\end{tabular}
\end{center}
Note that the two vectors
 \begin{equation} \label{eq:1}
(-1,0,\ldots,0,1,0,1,0),(-1,0,\ldots,0,0,1,0,1)\in S_k(1)
\end{equation}
are unmatched in this matching. 
By this matching, we can show
$
|X| \leq \mathfrak{M}_k.
$

We will classify the sets attaining this bound. 
If $|X|=\mathfrak{M}_{k}$, then the two vectors in \eqref{eq:1} must be contained in 
$X$. Therefore $X$ does not contain any element of $\mathcal{S}_2$, and contains 
an element of $S_k(1)$ which matches some element of 
$\mathcal{S}_2$. 
For $k=3$, $X$ therefore contains $S_k(1)$, and $X=Z_3$. 
For $k\geq 4$, we assume that $x \in X$ for some $x\in S_k(1)$ with $x_2=1$. 
By Lemma~\ref{lem:match}, $X$ must contain any $x \in S_k(1)$. Therefore $X=Z_k$. 
If $X$ does not contain any $x\in S_k(1)$ with $x_2=1$, namely $n_2(X,1)=0$. 
Therefore $X$ is isomorphic to 
$X_k$, $Y_k$, or $Z_k$. 
\end{proof} 
Matchings (i)--(iii) and the notation $\mathcal{S}_1$, $\mathcal{S}_2$ defined in the proof of Lemma~\ref{lem:induc} are used again later. 
The base case in the induction is the case $k=3$. We will prove the cases $k=1,2,3$ in order.
\begin{proposition}\label{prop:k1}
Let $X\subset L_{112}$ with $D(X)<D(L_{112})$. Then we have
\[
|X| \leq \mathfrak{M}_1=6. 
\]
If equality holds, then $X=X_1$, or $Y_1$, up to isomorphism. 
\end{proposition}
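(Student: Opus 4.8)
The plan is to treat this base case completely explicitly, since $L_{112}$ lives in $\mathbb{R}^4$ and has only $12$ elements. First I would normalize the geometry: every $x \in L_{112}$ has $\|x\|^2 = 3$, so $d(x,y)^2 = 6 - 2(x,y)$ and the diameter $\sqrt{10}$ is attained exactly when $(x,y) = -2$. Thus $D(X) < D(L_{112})$ is equivalent to $X$ being an independent set in the diameter graph $G$ on $L_{112}$ whose edges are the pairs with inner product $-2$, and the whole statement reduces to computing the independence number of $G$ together with all of its maximum independent sets.

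Next comes the key structural step. For $x \in L_{112}$ let $P(x) \subseteq \{1,2,3,4\}$ be the pair of coordinates equal to $1$. A short computation of $(x,y)$ shows that $(x,y) = -2$ precisely when $P(x)$ and $P(y)$ are complementary, i.e. $P(y) = \{1,2,3,4\} \setminus P(x)$; in particular each vertex has exactly two neighbours (the two vectors realizing the complementary pair), so $G$ is $2$-regular. Grouping the twelve vectors according to the three complementary pairs of $2$-subsets---equivalently the three perfect matchings of $K_4$---exhibits $G$ as a disjoint union of three copies of $K_{2,2} = C_4$, each $C_4$ consisting of the four vectors whose $1$-positions lie in one complementary pair. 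Getting the adjacency criterion right is the main thing here; once $G = 3\,C_4$ is established, everything else is bookkeeping.

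The bound is then immediate: each $C_4$ contributes at most $2$ to an independent set, so $|X| \le 6 = \mathfrak{M}_1$, and equality forces choosing a ``diagonal'' (a same-$P$ pair) in each $C_4$. Such a choice amounts to selecting one $2$-subset $P_i$ from each of the three matchings and taking both vectors with $1$-positions $P_i$, which yields exactly $2^3 = 8$ maximum independent sets.

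Finally I would classify these $8$ sets up to the coordinate action of $S_4$. The three selected $2$-subsets, one per matching, form either three edges meeting at a common vertex (a star) or three edges spanning a triangle on three of the four positions. Since $S_4$ permutes positions, it acts transitively on the four stars and on the four triangles while preserving the type, so there are exactly two orbits. Identifying $Y_1 = N_1(L_{112},1)$ with the star centred at position $1$, and $X_1 = N_1(L_{112},-1) \cup N_1(L_{112},0)$ with the triangle omitting position $1$ (and noting that the star and the triangle are non-isomorphic as edge-configurations, so the two classes are genuinely distinct), completes the classification. The only real obstacle is the adjacency computation together with keeping the star/triangle dichotomy straight; the rest is forced.
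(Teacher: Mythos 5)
Your proposal is correct and takes essentially the same approach as the paper: both identify the diameter graph of $L_{112}$ as $C_4 \cup C_4 \cup C_4$, deduce $|X| \leq 6$, observe there are $2^3 = 8$ maximum independent sets, and split them into the two $S_4$-orbits represented by $X_1$ and $Y_1$. The only difference is cosmetic: the paper finishes by an orbit--stabilizer count ($4+4=8$), while you classify the eight diagonal-selections directly via the star/triangle dichotomy, and you also spell out the adjacency criterion ($(x,y)=-2$ exactly when the $1$-position pairs are complementary) that the paper leaves implicit.
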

\begin{proof}
Since the diameter graph $G$ of $L_{112}$ is isomorphic to $C_4\cup C_4 \cup C_4$, where $C_4$ is the $4$-cycle, the bound $|X|\leq 6$ clearly holds. 
Considering the permutation of coordinates,  $G$ has the automorphism group $S_4$. 
Since the stabilizer of $X_1$ in $S_4$ is of order $6$, the orbit of $X_4$ has length $4$. 
Similarly the orbit of $Y_1$ has length $4$. Since the number of maximum independent sets of $G$ is $2^3=8$, 
this proposition follows. 
\end{proof}
For $k=2$, we also classify $(\mathfrak{M}_2-1)$-point sets $X$ with $D(X)<D(L_{122})$ in order to prove the case $k=3$. 
\begin{proposition}\label{prop:k2}
Let $X\subset L_{122}$ with $D(X)<D(L_{122})$. Then we have
\[
|X| \leq \mathfrak{M}_2=12. 
\]
If $|X|=12$, then $X=X_2$, $Y_2$, or $Z_2$, up to isomorphism. 
If $|X|=11$, then $X$ is  
\begin{equation*} \label{eq:pro1}
V_2=X_2' \cup \{(-1,0,0,1,1),(-1,0,1,0,1),(-1,0,1,1,0),(-1,1,1,0,0),(1,-1,1,0,0) \},
\end{equation*}
\begin{equation*} \label{eq:pro2}
W_2=Y_2'\cup \{(-1,1,1,0,0),(-1,1,0,1,0),(-1,1,0,0,1),(-1,0,0,1,1),(1,1,-1,0,0) \},
\end{equation*}
or the set 
obtained by removing a point from $X_2$, $Y_2$, or $Z_2$,  
up to isomorphism. 
\end{proposition}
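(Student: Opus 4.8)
The plan is to classify $X\subset L_{122}$ with $D(X)<D(L_{122})$ and $|X|\in\{11,12\}$ by leveraging the averaging estimate of Lemma~\ref{lem:M_k} together with the base classifications for $k=1$. First I would observe that the average of $n_i(X,0)$ over the $n=5$ coordinates equals $2|X|/5$, so for $|X|\geq 11$ some coordinate $i$ has $n_i(X,0)\geq \lceil 22/5\rceil = 5$. Since $\mathfrak{M}_1=6$, this alone does not immediately give $n_i(X,0)=\mathfrak{M}_1$; the first real step is to sharpen this. For $|X|=12$ the average is $24/5>4$, forcing some $n_i(X,0)\geq 5$, and a short argument should push this to $n_i(X,0)=6=\mathfrak{M}_1$: if every coordinate had $n_i(X,0)\leq 5$ then $\sum_i n_i(X,0)\leq 25$, but $\sum_i n_i(X,0)=2|X|=24$ is consistent, so I instead examine the slice $N_i(X,0)$ directly. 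Fixing a coordinate with $n_i(X,0)$ maximal and deleting that coordinate, $N_i(X,0)$ projects into $L_{112}$ with the diameter condition preserved, so by Proposition~\ref{prop:k1} it has at most $6$ elements and, when it has exactly $6$, is isomorphic to $X_1$ or $Y_1$. This is the structural engine.

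Once I know that the zero-slice at some coordinate realizes one of the extremal $k=1$ configurations, I would run the inductive-step analysis of Lemma~\ref{lem:induc}, which with $k=2$ says precisely that $X$ contains $X_2'$, $Y_2'$, or $Z_2'$ and that the remaining candidate points lie in $S_2(1)\cup U_2(k)$ (respectively the $U_2(k-1)\cup\mathcal{S}_1$ set). The matchings (i) and (ii) already constructed in that proof give $|X|\leq\mathfrak{M}_2=12$ and, more importantly, their maximum independent sets pin down $X_2$, $Y_2$, $Z_2$ for the equality case. So the $|X|=12$ part is essentially a direct invocation of Lemma~\ref{lem:induc} specialized to $k=2$, after establishing that some $n_i(X,0)=6$.

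The substantive work is the $|X|=11$ case, because here the averaging does not force $n_i(X,0)=6$, and $X$ need not contain a full copy of $X_2',Y_2',Z_2'$. The approach is to split on the value $\mu=\max_i n_i(X,0)$. If $\mu=6$, the projected zero-slice is $X_1$ or $Y_1$ up to isomorphism, so $X$ contains $X_2'$ or $Y_2'$ (not $Z_2'$, since $Z_2'$ needs its own analysis which for $k=2$ degenerates), and then $X$ is obtained by adjoining five points from the matched bipartite diameter graphs of Matchings (i)/(ii); since the matchings have perfect matchings on the non-isolated parts, an $11$-element independent set must omit exactly one matched pair, and tracing which pair can be omitted while keeping the three isolated vertices available yields exactly the sets $V_2$, $W_2$, and the one-point deletions of $X_2,Y_2,Z_2$. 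If instead $\mu=5$, then $\sum_i n_i(X,0)=22$ with each term $\leq 5$ forces the multiset of the five values $n_i(X,0)$ to be $(5,5,4,4,4)$ up to order, and I would argue this configuration is too constrained to avoid the diameter — each size-$5$ slice embeds in $L_{112}$ as a $5$-subset avoiding the diameter, and a double-counting or direct adjacency argument rules out assembling eleven points, so $\mu=5$ cannot occur. The main obstacle will be this $\mu=5$ elimination together with the bookkeeping of exactly which single matched edge may be dropped in the $\mu=6$ subcase, since one must check the three isolated vertices remain compatible and verify that distinct dropped edges give isomorphic sets, collapsing the apparent many cases down to the stated representatives $V_2$ and $W_2$.
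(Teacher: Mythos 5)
Your overall architecture (split on $\mu=\max_i n_i(X,0)$; project the zero-slice into $L_{112}$ and invoke Proposition~\ref{prop:k1}; run Lemma~\ref{lem:induc} and the matchings when $\mu=6$) matches the paper's, but the proposal breaks at the step you yourself flag as the substantive work: the elimination of $\mu=5$. Your claim that ``$\mu=5$ cannot occur'' for $|X|=11$ is false, so no double-counting or adjacency argument can establish it. Concretely, take $X_2\setminus\{(0,0,-1,1,1)\}$: one checks $n_1=n_2=5$ and $n_3=n_4=n_5=4$, a zero-count multiset of $(5,5,4,4,4)$ with every entry at most $5$. Since the multiset of the $n_i(X,0)$ is invariant under coordinate permutations, this legitimate $11$-point set (a one-point deletion of $X_2$, hence in the statement's list) can never be reached through your $\mu=6$ branch up to isomorphism; your plan would instead ``prove'' it does not exist. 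The paper does not eliminate the case $n_i(X,0)\leq 5$ for all $i$ --- it classifies within it: from a $5$-point zero-slice, which is a $5$-point independent set in the diameter graph $C_4\cup C_4\cup C_4$ of $L_{112}$ and hence extends uniquely to a maximum independent set isomorphic to $X_1$ or $Y_1$, it deduces that $X$ contains a $5$-point subset of $X_2'$ or $Y_2'$, and then a vector-by-vector candidate analysis (using the automorphism groups of $X_2'$, $Y_2'$ and the stated isolated vertices) shows every such $X$ is a subset of $X_2$, $Y_2$, or $Z_2$, or is isomorphic to $W_2$ or $Y_2$ --- sets which happen to have a $6$-zero coordinate elsewhere, closing the loop.

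The same defect infects your $|X|=12$ argument. You correctly note that averaging only yields some $n_i\geq 5$ and that $\sum_i n_i(X,0)=24\leq 25$ is consistent with all $n_i\leq 5$, but the promised ``short argument'' forcing $n_i(X,0)=6$ never appears; you then write ``after establishing that some $n_i(X,0)=6$'' as if it had been done. A priori a $12$-point set with zero-count multiset $(5,5,5,5,4)$ must be excluded, and the only route offered by the paper is the same case analysis as above, which shows that $n_i(X,0)\leq 5$ for all $i$ forces $|X|\leq 11$. Without that analysis, both the bound's equality case and the $|X|=11$ classification are unproven. Your $\mu=6$ bookkeeping (which matched pair or isolated vertex an $11$-point independent set omits, yielding $V_2$, $W_2$, and deletions) is plausible and close to what the paper does in its first case, but the proposal as written cannot be completed because its key reduction asserts a false statement.
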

\begin{proof}
First suppose $n_i(X,0)=6$ for some $i$.  Then we have $|X|\leq 12$, and $X$ with $|X|=12$ is $X_2$, $Y_2$, or $Z_2$ by Lemma~\ref{lem:induc}. 
In order to find $X$ with $|X|=11$, 
we consider $5$-point independent sets in 
the diameter graph of $S_2(1) \cup U_2(2)$ or 
$S_2(1) \cup U_2(1) \cup \mathcal{S}_1$. 
If $X$ is not isomorphic to a subset of $X_2$, $Y_2$, or $Z_2$, 
then $X=V_2$ from $S_2(1) \cup U_2(2)$, and 
$X=W_2$ from $S_2(1)\cup U_2(1) \cup \mathcal{S}_1$. 

Suppose $n_i(X,0)\leq 5$ for any $i$. 
If $|X|\geq 11$, then 
the average of $n_i(X,0)$ is greater than $4$. Without loss of generality, we may assume $n_1(X,0)=5$. 
Since the diameter graph of 
$L_{112}$ is $C_4\cup C_4 \cup C_4$, we can show that $X$ contains a $5$-point subset of $X_2'$ or $Y_2'$. 

(i) Suppose $X$ contains a $5$-point subset of $X_2'$. By considering the automorphism group of $X_2'$, we may assume 
$X$ contains the $5$-point subset obtained by removing $(0,-1,0,1,1)$ or $(0,0,-1,1,1)$. First assume that $X$ contains the $5$-point subset 
obtained by removing $(0,-1,0,1,1)$. Since other candidates of elements of $X$ are still in $S_2(1)\cup U_2(2)$, we have
$|X|\leq 11$, and if $|X|=11$, then
$X$ is isomorphic to a subset of $X_2$, $Y_2$, or $Z_2$. Assume that $X$ contains the $5$-point subset 
obtained by 
removing $(0,0,-1,1,1)$.
The set of other candidates of elements of $X$ is 
$S_2(1)\cup U_2(2)\cup 
\{(1,0,1,-1,0),(1,0,1,0,-1) \}$. 
If $X$ does not contain both 
$(1,0,1,-1,0)$ and $(1,0,1,0,-1)$, then $|X| \leq 11$, and  
$X$ attaining this bound is isomorphic to a subset of $X_2$, $Y_2$, or $Z_2$. 
To make a new set, $X$ may contain $(1,0,1,-1,0)$. The two vectors $(-1,1,0,1,0), 
(-1,0,0,1,1) \in S_2(1)$, which are at distance $\sqrt{10}$ from $(1,0,1,-1,0)$, are not contained in $X$. The set $P_1$ consisting of the two isolated vertices
\[
(-1,0,1,0,1),(-1,0,1,1,0) \in S_2(1)
\]
and $6$ points  
\[
(-1,1,1,0,0),(-1,1,0,0,1),(1,-1,1,0,0),
(1,-1,0,1,0),
 (1,-1,0,0,1),(1,0,1,0,-1) 
\]
has the unique maximum $6$-point independent set
\[
\{(-1,0,1,0,1),(-1,0,1,1,0), (1,-1,1,0,0),
(1,-1,0,1,0),
 (1,-1,0,0,1),(1,0,1,-1,0)\}, 
\]
which gives $X$ isomorphic to $Y_2$, and 
$n_2(X,0)=6$.  
If $X$ contains a $5$-point independent set 
in $P_1$ and  is not isomorphic to a subset of  $Y_2$,  then $X$  contains the $5$-point independent set
\[
\{(-1,0,1,0,1),(-1,0,1,1,0), (-1,1,1,0,0),(1,-1,1,0,0),(1,0,1,0,-1)\}. 
\]
Then $X$ is isomorphic to $W_2$ and
 $n_2(X,0)=6$. 

(ii) Suppose $X$ contains a $5$-point subset of $Y_2'$. By considering the automorphism group of $Y_2'$, we may assume 
$X$ contains the $5$-point subset 
obtained by 
removing $(0,1,-1,0,1)$. 
The set of other candidates of elements of $X$ is $S_2(1)\cup \mathcal{S}_1\cup \{(1,0,1,0,-1)\}$. 
To make a new set, $X$ may contain $(1,0,1,0,-1)$. The two vectors $(-1,1,0,0,1),(-1,0,0,1,1) \in S_2(1)$, which are at distance $\sqrt{10}$ from $(1,0,1,0,-1)$, are not contained in $X$. The set consisting of the two isolated vertices
\[
(-1,1,1,0,0),(-1,1,0,1,0) \in S_2(1)
\]
and $5$ points  
\[
(-1,0,1,1,0),
(-1,0,1,0,1),  
(1,1,-1,0,0),
(1,1,0,-1,0),
(1,1,0,0,-1)
\]
has 
the unique maximum $5$-point independent set
\[
\{(-1,1,1,0,0),(-1,1,0,1,0), (1,1,-1,0,0),
(1,1,0,-1,0),
(1,1,0,0,-1)\}, 
\]
which gives $X$ is isomorphic to a subset of $Z_2$. 
\end{proof}
\begin{proposition}\label{prop:k3}
Let $X\subset L_{132}$ with $D(X)<D(L_{132})$. Then we have
\[
|X| \leq \mathfrak{M}_3=22. 
\]
If equality holds, then $X=X_3$, $Y_3$, or $Z_3$, up to isomorphism. 
\end{proposition}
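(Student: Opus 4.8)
The plan is to run the base step of the induction by reducing everything to Proposition~\ref{prop:k2}, which classifies both the $12$-point and the $11$-point subsets of $L_{122}$. The recurring observation is that for any coordinate $i$ the slice $N_i(X,0)$, after deleting the $i$-th (zero) entry, becomes a subset of a copy of $L_{122}$ whose diameter is at most $D(X)<\sqrt{10}=D(L_{122})$; hence $n_i(X,0)\le \mathfrak{M}_2=12$ by Proposition~\ref{prop:k2}. Since every vector of $L_{132}$ has exactly three zeros and $L_{132}\subset\mathbb{R}^6$, we have the averaging identity $\sum_{i=1}^{6} n_i(X,0)=3|X|$.

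For the cardinality bound I would argue by contradiction. If $|X|\ge 23$, then $\sum_i n_i(X,0)\ge 69$, so pigeonhole over the six coordinates forces some $n_i(X,0)\ge 12$, hence $n_i(X,0)=12$. Then Lemma~\ref{lem:induc}, applied with $k=3$ and using Proposition~\ref{prop:k2} as the $k-1=2$ case of Theorem~\ref{thm:main}, gives $|X|\le \mathfrak{M}_3=22$, a contradiction. Thus $|X|\le 22$.

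For the classification of the extremal case $|X|=22$, the averaging identity reads $\sum_i n_i(X,0)=66=6\cdot 11$ with each term at most $12$. Either some $n_i(X,0)=12$, in which case Lemma~\ref{lem:induc} identifies $X$ as $X_3$, $Y_3$, or $Z_3$ directly; or $n_i(X,0)\le 11$ for every $i$, which forces $n_i(X,0)=11$ for all six coordinates. The entire remaining content of the proposition is therefore to rule out this uniformly balanced configuration.

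This last step is the main obstacle. Note first that $X_3$, $Y_3$, $Z_3$ each possess a coordinate with $n_i(X,0)=12$ (by their inductive construction $\{(0,x):x\in X_2\}\cup N_1(L_{132},-1)$, and likewise for $Y_3,Z_3$), so a set with all zero-slices of size exactly $11$ would be a genuinely new extremal configuration and must be excluded. To do this I would invoke the $11$-point classification of Proposition~\ref{prop:k2} on each of the six zero-slices: every $N_i(X,0)$ is isomorphic to $V_2$, $W_2$, or a one-point deletion of $X_2$, $Y_2$, or $Z_2$. Using the complementary averages $\sum_i n_i(X,-1)=22$ and $\sum_i n_i(X,1)=44$ to single out a coordinate on which to begin, I would then check the compatibility of these local structures across coordinates, tracking how the $-1$-positions and $+1$-positions inside a fixed slice $N_i(X,0)$ constrain the vectors with $x_i=\pm1$; here one uses that two vectors of $L_{132}$ lie at distance $\sqrt{10}$ exactly when the $-1$ of each sits on a $+1$-position of the other and they share no common $+1$-position. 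The expectation is that no assignment of slice-types to all six coordinates glues into a diameter-avoiding $22$-point set, giving the required contradiction; carrying out this gluing and consistency analysis, rather than the averaging, is where the real work lies.
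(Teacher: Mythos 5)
Your setup reproduces the paper's reduction exactly: each zero-slice $N_i(X,0)$ is a diameter-avoiding subset of a copy of $L_{122}$, so $n_i(X,0)\leq\mathfrak{M}_2=12$ by Proposition~\ref{prop:k2}; the identity $\sum_{i=1}^6 n_i(X,0)=3|X|$ plus pigeonhole and Lemma~\ref{lem:induc} gives $|X|\leq 22$; and in the equality case either some slice has size $12$ (Lemma~\ref{lem:induc} then classifies $X$) or every slice has size exactly $11$, whereupon the $11$-point classification in Proposition~\ref{prop:k2} tells you that, after normalizing one coordinate, $X$ contains $V_3'$, $W_3'$, or $X_3'$, $Y_3'$, $Z_3'$ with one point removed. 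All of this is correct, and your distance criterion for $\sqrt{10}$ in $L_{132}$ (each $-1$ on a $+1$ of the other, no shared $+1$) is right.

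The genuine gap is that you stop exactly where the proposition's content begins: the ``gluing and consistency analysis'' you defer as an expectation is the entire body of the paper's proof, and no argument is given for it. Note also that the paper does not compare slice-types across all six coordinates as you propose; it fixes a single normalized $11$-point slice and analyzes extensions, which is considerably more tractable. Concretely, it runs five cases --- the slice is $X_3'$, $Y_3'$, or $Z_3'$ minus a point, or $V_3'$, or $W_3'$. In the first three, the automorphism group of $X_3'$ (resp.\ $Y_3'$, $Z_3'$) reduces the removed point to at most four positions; for most removals the candidate set for $X\setminus N_1(X,0)$ is unchanged and $|X|<22$, while the exceptional removals (e.g.\ $(0,0,0,-1,1,1)$ from $X_3'$) admit finitely many new candidate vectors such as $(1,0,0,1,-1,0)$ and $(1,0,0,1,0,-1)$, whose inclusion excludes the three vectors at distance $\sqrt{10}$ from them; then Matchings (i)--(iii) from the proof of Lemma~\ref{lem:induc} together with Lemma~\ref{lem:match} force either $|X|<22$ or $n_2(X,1)=0$ and $X\cong X_3$, $Y_3$, or $Z_3$. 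The cases $V_3'$ and $W_3'$ are eliminated by a maximum-matching count ($|X|\leq 21$). One further inaccuracy in your framing: the balanced case need not terminate in an outright contradiction --- in one subcase the analysis concludes $X\cong X_3$, $Y_3$, or $Z_3$ directly, which is what the proposition asserts (and only retroactively contradicts the all-$11$ normalization, since each extremal set has a $12$-slice). Without carrying out this case analysis, the classification --- and indeed the base case of the induction proving Theorem~\ref{thm:main} --- remains unproven.
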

\begin{proof}
If $n_i(X,0)=12$ for some $i$, then 
we have $|X|\leq 22$, and the set attaining this bound is $X_3$, $Y_3$, or
$Z_3$ by Lemma~\ref{lem:induc}. 

Suppose $n_i(X,0)\leq 11$ for any $i$. 
If $|X|>22$, then the average of $n_i(X,0)$ is greater than $11$, which gives a contradiction. Therefore $|X|\leq 22$, and if $|X|=22$, then the average of $n_i(X,0)$ is $11$, and 
$n_i(X,0)=11$ for any $i$.  
By Proposition~\ref{prop:k2}, $X$ may contain 
\[
V_3'=\{(0,v) \in L_{132} \mid v \in V_2 \}, 
\]
\[
W_3'=\{(0,w) \in L_{132} \mid w \in W_2 \},
\]
or an $11$-point set obtained by removing a point from 
$X_3'$, $Y_3'$, or $Z_3'$.  

(i) Suppose $X$ contains an $11$-point subset of $X_3'$. 
By considering the automorphism group of $X_3'$, 
$X$ may contain the set in $X_3'$ obtained by removing $(0,-1,0,0,1,1)$, $(0,-1,1,1,0,0)$, $(0,0,-1,0,1,1)$,
or $(0,0,0,-1,1,1)$.
If $X$ contains the set  
$X_3'$ with
$(0,-1,0,0,1,1)$, $(0,-1,1,1,0,0)$, or $(0,0,-1,0,1,1)$ removed, then the set of other candidates of $X$ is still $S_3(1)\cup U_3(3)$, and $|X|<22$.  
Suppose $X$ contains the set 
$X_3'$ with
$(0,0,0,-1,1,1)$ removed. 
Then new candidates of vectors of $X$ are only $(1,0,0,1,-1,0)$ and $(1,0,0,1,0,-1)$,
and $X$ may contain $(1,0,0,1,-1,0)$. 
The three vectors $(-1,1,0,0,1,0)$, $(-1,0,1,0,1,0)$, and $(-1,0,0,0,1,1)$, which are at distance $\sqrt{10}$ from $(1,0,0,1,-1,0)$, are not contained in $X$. 
Therefore 
by 
$|X|=22$, 
the other new candidate $(1,0,0,1,0,-1)$, and two isolated vectors $(-1,0,0,1,0,1)$, and $(-1,0,0,1,1,0)$ 
must be contained in $X$.  
Moreover a $7$-point independent set must be obtained from Matching (i). 
Since $(-1,1,0,0,1,0)$ and $(-1,0,1,0,1,0)$ are not contained in $X$, 
by Lemma~\ref{lem:match}, $(1,-1,0,0,0,1)$ and $(1,0,-1,0,0,1)$ must be contained in $X$, and consequently any element of $U_2(2)$ is contained in $X$. 
This implies $n_2(X,1)=0$, and $X$ is isomorphic to $X_3$, $Y_3$, or $Z_3$. 

(ii) Suppose $X$ contains an $11$-point subset of $Y_3'$. 
By considering the automorphism group of $Y_3'$, 
$X$ may contain the set in $Y_3'$ obtained by removing $(0,-1,0,0,1,1)$, 
$(0,-1,1,1,0,0)$, or $(0,0,1,-1,0,1)$. 
If $X$ contains the set 
$Y_3'$
with
$(0,-1,0,0,1,1)$, or $(0,-1,1,1,0,0)$ removed,
 then 
the set of other candidates of $X$ is still $S_3(1)\cup U_3(2) \cup \mathcal{S}_1$, and $|X|<22$. 
Suppose $X$ contains the set  
$Y_3'$
with $(0,0,1,-1,0,1)$ removed. 
Then a new candidate of an element of $X$ is only $(1,0,0,1,0,-1)$,
and $X$ may contain $(1,0,0,1,0,-1)$.
The three vectors $(-1,1,0,0,0,1)$, $(-1,0,1,0,0,1)$, and 
$(-1,0,0,0,1,1)$, which are at distance $\sqrt{10}$ from 
$(1,0,0,1,0,-1)$, are not contained in $X$. 
By considering Matching (ii), 
we can show $|X|<22$. 

(iii) Suppose $X$ contains an $11$-point subset of $Z_3'$. 
By considering the automorphism group of $Z_3'$, 
$X$ may contain the set in $Z_3'$ obtained by removing $(0,1,-1,0,0,1)$.
Then a new candidate of an element of $X$ is only $(1,0,1,0,0,-1)$,
and $X$ may contain $(1,0,1,0,0,-1)$. 
The three vectors $(-1,1,0,0,0,1)$, $(-1,0,0,1,0,1)$, and $(-1,0,0,0,1,1)$,  which are at distance $\sqrt{10}$ from $(1,0,1,0,0,-1)$, are not contained in $X$. 
By considering Matching (iii),
we can show $|X|<22$.

(iv) Suppose $X$ contains $V_3'$. 
The set of other candidates of $X$ is $S_3(1)\cup U_3(3)\setminus \{(1,-1,1,0,0,0)\}$, 
and the maximum independent set is of order at most $10$ by Matching (i). Thus $|X|<22$. 

(v) Suppose $X$ contains $W_3'$. 
The set of other candidates of $X$ is $S_3(1)\cup U_3(2) \cup \mathcal{S}_1 \setminus \{(1,-1,0,1,0,0)\}$, 
and the maximum independent set is of order at most $10$ by Matching (ii). Thus $|X|<22$. 

Therefore this proposition follows. 
\end{proof}
Finally we prove Theorem~\ref{thm:main}. 
\begin{proof}[Proof of Theorem~\ref{thm:main}]
By Propositions~\ref{prop:k1}--\ref{prop:k3}, the statement holds for $k=1,2,3$. 
By the inductive hypothesis and Lemma~\ref{lem:M_k}, if $|X| \geq \mathfrak{M}_k$, then there exists $i \in \{1,\ldots,n\}$ such that
$n_i(X,0) = \mathfrak{M}_{k-1}$ for $k\geq 4$. By Lemma~\ref{lem:induc}, this theorem holds for any $k$. 
\end{proof}
\section{Classification of the largest $4$-distance sets which contain $\tilde{J}(n,4)$ }
A finite set $X$ in $\mathbb{R}^d$ is called an
$s$-distance set if the set of Euclidean distances of two distinct vectors in $X$ has size $s$. 
The Johnson graph $J(n,m)=(V,E)$, where   
\begin{align*}
V&=\{\{i_1,\ldots, i_m \} \mid 1 \leq i_1<\cdots < i_m \leq n,i_j \in \mathbb{Z} \}, \\
E&=\{(v,u) \mid |v\cap u|=m-1, v,u \in V\},
\end{align*}
is represented into $\mathbb{R}^{n-1}$ as the $m$-distance set $\tilde{J}(n,m)=L_{0,{n-m},m}$.
Indeed $\tilde{J}(n,m) \subset \mathbb{R}^n$, 
but the summation of all entries of any $x \in \tilde{J}(n,m)$ is $m$, and $\tilde{J}(n,m)$ is on a hyperplane isometric to $\mathbb{R}^{n-1}$. 
Bannai, Sato, and Shigezumi \cite{BSS12} investigated  $m$-distance sets containing 
$\tilde{J}(n,m)$. In their paper, for $m\leq 5$ and any $n$, 
the largest $m$-distance sets containing $\tilde{J}(n,m)$ are classified except 
for $(n,m)=(9,4)$. 
In this section, the case $(n,m)=(9,4)$ will be classified. 

The set 
of 
Euclidean distances 
of two distinct points of $\tilde{J}(9,4)$ is $\{\sqrt{2},\sqrt{4},\sqrt{6}, \sqrt{8}\}$. The set of vectors which can be added to $\tilde{J}(9,4)$ while maintaining $4$-distance is the union of the following sets \cite{BSS12}.
\begin{align*}
&X^{(i)}=\left( \left( \frac{2}{3}\right)^7, \left(- \frac{1}{3} \right)^2 \right)^P, & 
&X^{(ii)}=\left( \left( \frac{2}{3}\right)^8, - \frac{4}{3}  \right)^P, \\
&X^{(iii)}=\left(  \frac{4}{3}, \left( \frac{1}{3} \right)^8 \right)^P, &
&X^{(iv)}=\left( \left( \frac{4}{3}\right)^2, \left( \frac{1}{3} \right)^6, - \frac{2}{3}  \right)^P,
\end{align*} 
 where the exponents inside indicate the number of occurrences of the corresponding numbers, 
and the exponent $P$ outside indicates that we should take every permutation. 
They conjectured that  
$\tilde{J}(9,4) \cup X^{(i)}\cup X^{(iii)} \cup \{ (-4/3,(2/3)^8 )\}\cup X^{(iv)'}$ 
is largest, where $(-4/3,(2/3)^8 ) \in X^{(ii)}$, and 
\begin{align*}
X^{(iv)'}=&\left\{(x_1,\ldots,x_9) \in X^{(iv)} \mid x_i=-\frac{2}{3}, 
x_{j_1}=\frac{4}{3},x_{j_2}=\frac{4}{3}, i < j_1,j_2 \right\} \\
&\cup 
\left\{\left( \left(\frac{1}{3} \right)^6,\frac{4}{3},-\frac{2}{3},\frac{4}{3}\right),
\left( \left( \frac{1}{3}\right)^6,\left( \frac{4}{3}\right)^2,-\frac{2}{3}  \right)
 \right\}.
\end{align*}
Actually 
$
X^{(iv)'}
$
is isometric to $X_6$ in Section~\ref{sec:2} by replacing $-2/3$, $1/3$, $4/3$ to $-1$, $0$, $1$, respectively. Let $X^{(iv)''}$ ({\it resp.} $X^{(iv)'''}$) be the set obtained from $Y_6$  ({\it resp.} $Z_6$) by the same manner. 
Using Theorem~\ref{thm:main}, we can classify the largest $4$-distance sets containing $\tilde{J}(9,4)$. 
\begin{theorem}
Let $X \subset \{(x_1,\ldots,x_9) \in \mathbb{R}^9 \mid x_1+ \cdots +x_9=1\}$
be a $4$-distance set which contains $\tilde{J}(9,4)$. Then we have
\[
|X|\leq 258. 
\]
If equality holds, then $X$ is one of the following, up to permutations of coordinates. 
\begin{enumerate}
\item $\tilde{J}(9,4) \cup X^{(i)}\cup X^{(iii)} \cup \{ (-4/3,(2/3)^8 )\}\cup X^{(iv)'}$,
\item $\tilde{J}(9,4) \cup X^{(i)}\cup X^{(iii)} \cup \{ (-4/3,(2/3)^8 )\}\cup X^{(iv)''}$,
\item $\tilde{J}(9,4) \cup X^{(i)}\cup X^{(iii)} \cup \{ (-4/3,(2/3)^8 )\}\cup X^{(iv)'''}$. 
\end{enumerate}
\end{theorem}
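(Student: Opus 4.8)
The plan is to turn the problem into a maximum–independent–set computation and then feed its essential part into Theorem~\ref{thm:main}. Any admissible $X$ has the shape $X=\tilde{J}(9,4)\cup S$ with $S$ contained in the candidate union $C=X^{(i)}\cup X^{(ii)}\cup X^{(iii)}\cup X^{(iv)}$ described above. Since each candidate was selected precisely so that it keeps $4$-distance with $\tilde{J}(9,4)$, every candidate-to-$\tilde{J}(9,4)$ distance already lies in $\{\sqrt2,\sqrt4,\sqrt6,\sqrt8\}$ irrespective of what else is chosen; hence $X$ is a $4$-distance set exactly when no two vectors of $S$ are at a distance outside this set. Thus $|X|=126+|S|$ (as $|\tilde J(9,4)|=\binom{9}{4}=126$), and $S$ must be an independent set of the graph $H$ on $C$ whose edges join candidate pairs at a distance not in $\{\sqrt2,\sqrt4,\sqrt6,\sqrt8\}$. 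The whole theorem becomes the determination of the maximum independent sets of $H$.

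First I would pin down $H$ by a direct distance bookkeeping using $\|u-v\|^2=\|u\|^2+\|v\|^2-2\langle u,v\rangle$. The translation $x\mapsto x-\tfrac13(1,\ldots,1)$ is an isometry carrying $X^{(iv)}$ onto $L_{162}$ (sending $-2/3,1/3,4/3$ to $-1,0,1$), so inside $X^{(iv)}$ the forbidden pairs are exactly those at distance $\sqrt{10}=D(L_{162})$, i.e.\ the edges of the diameter graph of $L_{162}$. Computing the remaining families, I expect to find that no pair inside $X^{(i)}$, inside $X^{(iii)}$, inside $X^{(ii)}$, or across any two families except $X^{(ii)}$--$X^{(iv)}$ ever reaches squared distance $10$ (all those squared distances are at most $8$). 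The one genuine cross obstruction is that the vector of $X^{(ii)}$ carrying $-4/3$ in coordinate $p$ is at distance $\sqrt{10}$ from precisely those vectors of $X^{(iv)}$ carrying $4/3$ in coordinate $p$, which under the isometry are the elements of $N_p(L_{162},1)$. Consequently all $36+9=45$ vectors of $X^{(i)}\cup X^{(iii)}$ are isolated in $H$ and may always be taken, and the problem reduces to finding a maximum independent set $I'$ in the subgraph induced on $X^{(ii)}\cup X^{(iv)}$.

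Write $I'=I_2\cup I_4$ with $I_2\subset X^{(ii)}$, $I_4\subset X^{(iv)}$, set $t=|I_2|$, and let $P$ ($|P|=t$) be the set of coordinates carrying the $-4/3$ of the chosen $X^{(ii)}$ vectors. Then, via the isometry, $I_4$ is a subset of $L_{162}$ with $D(I_4)<D(L_{162})$ that contains no vector with a $1$ in any coordinate of $P$. The key estimate I would establish is that such an $I_4$ has at most $g_t=t\binom{9-t}{2}+\mathfrak{M}_{6-t}$ vectors: any $1$-free vector whose $-1$ lies in a coordinate of $P$ has no diameter-neighbour among the $1$-free vectors, because a diameter pair forces $\langle u,v\rangle=-2$, which would require the $-1$ of $u$ to meet a $1$ of $v$ in $P$; hence all $t\binom{9-t}{2}$ of them may be kept, while the $1$-free vectors having every coordinate of $P$ equal to $0$ form a copy of $L_{1,6-t,2}$ to which Theorem~\ref{thm:main} applies. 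Thus $|I'|\le t+g_t$; since $t+g_t$ equals $86,87,81,70,\ldots$ for $t=0,1,2,3,\ldots$, the maximum is $87$, attained only at $t=1$. This yields $|X|\le 126+45+87=258$.

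For the classification, equality forces $t=1$ and $|I_4|=\mathfrak{M}_6=86$, so by Theorem~\ref{thm:main} $I_4$ is, up to a permutation of coordinates, one of $X_6$, $Y_6$, $Z_6$. Each of these contains the block $N_1(L_{162},-1)$, which already realises a $1$ in every coordinate $2,\ldots,9$, so each omits a $1$ in exactly one coordinate, and that coordinate must be the position $p$ of the unique $X^{(ii)}$ vector; fixing $p$ (say $p=1$) and using the residual symmetry on the other eight coordinates collapses each type to a single orbit, giving exactly the three configurations built from $X^{(iv)'}\cong X_6$, $X^{(iv)''}\cong Y_6$, $X^{(iv)'''}\cong Z_6$ together with the common part $X^{(i)}\cup X^{(iii)}\cup\{(-4/3,(2/3)^8)\}$. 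The main obstacle is the extremal estimate $g_t$: one must verify that it is a genuine upper bound (not merely a construction) and that extra $X^{(ii)}$ vectors are never profitable, i.e.\ $t+g_t<87$ for every $t\ge2$. The observation that a $1$-free vector whose $-1$ sits in a $1$-free coordinate is automatically diameter-free is what isolates those vertices, makes Theorem~\ref{thm:main} directly applicable to the residual $L_{1,6-t,2}$, and drives the whole bound; the accompanying fact that $X_6,Y_6,Z_6$ each avoid a $1$ in exactly one coordinate is what prevents appending a second $X^{(ii)}$ vector and keeps the three extremal types distinct.
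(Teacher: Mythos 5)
Your proposal is correct and follows essentially the same route as the paper's proof: include $X^{(i)}\cup X^{(iii)}$ unconditionally, parametrize by the number $t$ of chosen $X^{(ii)}$ vectors, split $X\cap X^{(iv)}$ into the $t\binom{9-t}{2}$ vectors whose $-2/3$ lies under some $-4/3$ and the residual copy of $L_{1,6-t,2}$ bounded by $\mathfrak{M}_{6-t}$ via Theorem~\ref{thm:main}, and maximize $t+\mathfrak{M}_{6-t}+t\binom{9-t}{2}$ over $t$ with equality only at $t=1$, then classify via the three extremal sets $X_6,Y_6,Z_6$. The only caveats are cosmetic: for $t\geq 7$ your formula $g_t$ must be read with the $L_{1,6-t,2}$ part empty (the paper handles $t=7,8,9$ separately), and your isolated-vertex justification for the type-(a) vectors together with the direct application of Theorem~\ref{thm:main} to the full $I_4$ in the equality case is in fact slightly more explicit than the paper's terse classification step.
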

\begin{proof}
For any $x \in X^{(i)}\cup X^{(iii)}$, 
$y \in \cup_{j=1}^4 X^{(j)}$, the 
Euclidean distance of 
$x$, $y$ is in $\{\sqrt{2},\sqrt{4},\sqrt{6}, \sqrt{8}\}$, and hence 
$X$ may contain $X^{(i)}\cup X^{(iii)}$. 
The set $X^{(iv)}$ is isometric to 
$L_{162}$ by replacing $-2/3$, $1/3$, $4/3$ to $-1$, $0$, $1$, respectively. 
 Therefore the largest subsets of $X^{(iv)}$ with distances $\{\sqrt{2},\sqrt{4},\sqrt{6}, \sqrt{8}\}$
 are $X^{(iv)'}$, $X^{(iv)''}$,
and $X^{(iv)'''}$, up to permutations of coordinates. 
If $X$ does not contain any element of $X^{(ii)}$, then 
\[
|X|\leq |\tilde{J}(9,4)\cup X^{(i)}\cup X^{(iii)}| +|X^{(iv)'}|= 257.
\] 

If $X$ contains 
$x \in X^{(ii)}$ with $x_i=-4/3$, 
then $X$ cannot contain $y \in X^{(iv)}$ with $y_i=4/3$. 
By re-ordering the vectors, we may assume that the set
\[
X^{(ii)}(t)=\{x \in X^{(ii)} \mid x_i=-4/3,  \exists i \in \{1,\ldots, t\}
\}
\]
is in $X$ for some $t$. Clearly, 
from the definition of $X^{(ii)}(t)$, this set must have size $t$. 
For $t=7,8,9$, $X$ contains at most one  element of $X^{(iv)}$, and hence 
\[|X| \leq |\tilde{J}(9,4)\cup X^{(i)}\cup X^{(iii)}| + t +1\leq 181.
\]
If the set $X^{(ii)}(t)$ is in $X$ for $1\leq t \leq 6$, 
then consider the set of vectors in 
$X \cap X^{(iv)}$ in which the entry $1/3$
occurs in all of the first $t$ positions. 
The final $9-t$ entries of one of these 
vectors forms a vector from $L_{1,6-t,2}$; 
no two vectors in this set can be at the maximum distance. Thus the size of  
\[
|\{x \in X \cap X^{(iv)} \mid x_i=1/3,  \forall i \in \{1,\ldots, t\} \}| 
\]
is bounded by $\mathfrak{M}_{6-t}$. 
It is clear that  
\[
|\{x \in X \cap X^{(iv)} \mid x_i=-2/3, x_{j_1}=4/3, x_{j_2}=4/3, 
\exists i \in \{1,\ldots,t\}, 
\exists j_1,j_2 \in \{t+1,\ldots, 9\}
\}| 
\]
is bounded by $t \binom{9-t}{2}$.
Thus, for $1\leq t \leq 6$, we have
\begin{align*}
|X|& \leq |\tilde{J}(9,4) \cup X^{(i)}\cup X^{(iii)}|+t+\mathfrak{M}_{6-t}+t \binom{9-t}{2}\\
&= \frac{t^3}{3}-\frac{9 t^2}{2}+
\frac{31 t}{6}+257\leq 258,
\end{align*}
and equality holds only if $t=1$. 
The sets attaining this bound are only the three sets in the statement. 
\end{proof}

\section{Remarks on other $M_{mkl}$}
Actually  it is hard to determine $M_{mkl}$ for other $(m, k, l)$ by a similar manner in Section~\ref{sec:2}. 
Fix $m,l$, where $m < l$. By Proposition~\ref{prop:m+k<=l}, if $k\leq l-m$, then $M_{mkl}=\binom{n-1}{m+k-1}\binom{m+k}{m}$. In general there are many largest sets for $k=l-m$. For $k> l-m$, we can inductively construct a large set $X_{k}\subset L_{mkl}$ satisfying $D(X_{k})<D(L_{mkl})$ as follows 
\[
X_{k}=\{(0,x') \mid x' \in X_{k-1}\}\cup \{(x_1,\ldots,x_n) \in L_{mkl} \mid x_1= -1 \}, 
\]
where $X_{l-m}$ is a largest set for $k=l-m$. 
Therefore we have
\[
M_{mkl} \geq 
\mathfrak{M}_{mkl}:=
\binom{m+l-1}{m-1} \binom{k+m+l}{m+l}+\binom{m+l-1}{m}.  
\]
We can generalize Lemma~\ref{lem:M_k} as follows.
\begin{lemma} \label{lem:M_mkl}
Let $X\subset L_{mkl}$ with $D(X)\leq D(L_{mkl})$. 
Suppose $k\geq  m \binom{m+l}{m}-m-l+1$, and $|X| \geq \mathfrak{M}_{mkl}$.  
Then there exists $i \in \{1,\ldots,n\}$ such that
$n_i(X,0)\geq \mathfrak{M}_{m,k-1,l}$. 
\end{lemma}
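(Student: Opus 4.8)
The plan is to repeat the averaging (double counting) argument of Lemma~\ref{lem:M_k} in full generality. The starting point is that every vector of $L_{mkl}$ has exactly $k$ coordinates equal to $0$, so counting the pairs $(x,i)$ with $x\in X$ and $x_i=0$ in two ways yields
\[
\sum_{i=1}^{n} n_i(X,0)=k\,|X|.
\]
Dividing by $n=m+k+l$ and invoking the hypothesis $|X|\ge\mathfrak{M}_{mkl}$, the average of the $n_i(X,0)$ is at least $k\,\mathfrak{M}_{mkl}/(m+k+l)$. Because the $n_i(X,0)$ are integers, it is enough to prove that this average is strictly larger than $\mathfrak{M}_{m,k-1,l}-1$: then some coordinate $i$ has $n_i(X,0)>\mathfrak{M}_{m,k-1,l}-1$, and hence $n_i(X,0)\ge\mathfrak{M}_{m,k-1,l}$, which is the conclusion.

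The second step is a purely arithmetic evaluation of $k\,\mathfrak{M}_{mkl}/(m+k+l)$ from the closed form $\mathfrak{M}_{mkl}=\binom{m+l-1}{m-1}\binom{k+m+l}{m+l}+\binom{m+l-1}{m}$. The engine is the binomial identity
\[
\frac{k}{k+m+l}\binom{k+m+l}{m+l}=\binom{k+m+l-1}{m+l},
\]
which converts the dominant term of the average into $\binom{m+l-1}{m-1}\binom{k+m+l-1}{m+l}$, i.e.\ exactly $\mathfrak{M}_{m,k-1,l}-\binom{m+l-1}{m}$. Adding the (scaled) contribution of the constant term and collecting gives
\[
\frac{k\,\mathfrak{M}_{mkl}}{m+k+l}=\mathfrak{M}_{m,k-1,l}-\binom{m+l-1}{m}\cdot\frac{m+l}{m+k+l}.
\]
Thus the target inequality is equivalent to the residual term being strictly below $1$, namely $\binom{m+l-1}{m}(m+l)<k+m+l$.

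Finally I would check that the lower bound on $k$ is precisely calibrated to force this. Using $\binom{m+l-1}{m}=\frac{l}{m+l}\binom{m+l}{m}$, the condition $\binom{m+l-1}{m}(m+l)<k+m+l$ simplifies to $k>l\binom{m+l}{m}-(m+l)$; as a consistency check, for $(m,l)=(1,2)$ this reads $k\ge4$, matching the hypothesis of Lemma~\ref{lem:M_k}.

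I do not expect a genuine conceptual obstacle here, since the scheme is identical to that of Lemma~\ref{lem:M_k}. The only delicate point is the binomial bookkeeping: one must correctly isolate the additive constant $\binom{m+l-1}{m}$ in $\mathfrak{M}_{mkl}$, because it is this constant, not the dominant binomial, that produces the residual term and therefore dictates the exact threshold on $k$. I would carry out the verification of the key identity and the simplification of the residual with care, since an off-by-one or a misread binomial coefficient there would silently distort the admissible range of $k$.
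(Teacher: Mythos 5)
Your strategy is exactly the paper's: the same double count $\sum_{i=1}^{n} n_i(X,0)=k\,|X|$, the same reduction (via integrality) to showing the average exceeds $\mathfrak{M}_{m,k-1,l}-1$, and the same binomial manipulation. Your algebra is correct, and in fact cleaner than the paper's own display, whose residual $\frac{m+l}{m+k+l}\binom{m+k+l}{l}$ is evidently a misprint (as written it is typically much larger than $1$, which would make the paper's final inequality false). Starting from the stated definition $\mathfrak{M}_{mkl}=\binom{m+l-1}{m-1}\binom{k+m+l}{m+l}+\binom{m+l-1}{m}$, the residual really is $\frac{m+l}{m+k+l}\binom{m+l-1}{m}$, exactly as you computed.

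The gap is in your final ``calibration'' step. The condition your argument needs, $k>l\binom{m+l}{m}-(m+l)$, i.e.\ $k\geq l\binom{m+l}{m}-m-l+1$, is \emph{not} the hypothesis of the lemma, which reads $k\geq m\binom{m+l}{m}-m-l+1$, with $m$ rather than $l$ in front of the binomial coefficient. Under the paper's standing convention in this section ($m<l$), the stated hypothesis is strictly weaker: for $k$ in the nonempty range $m\binom{m+l}{m}-m-l+1\leq k<l\binom{m+l}{m}-m-l+1$ (of length $(l-m)\binom{m+l}{m}$), the residual is at least $1$ and the averaging proves nothing. Your consistency check masked this: you compared your threshold against the hypothesis $k\geq 4$ of Lemma~\ref{lem:M_k}, not against the hypothesis of the statement being proved, which at $(m,l)=(1,2)$ reads $k\geq 1$. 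The source of the mismatch is an internal inconsistency in the paper: since $m\binom{m+l}{m}=(m+l)\binom{m+l-1}{m-1}$, the stated hypothesis is precisely what the computation would yield if the additive constant in $\mathfrak{M}_{mkl}$ were $\binom{m+l-1}{m-1}$ instead of $\binom{m+l-1}{m}$, so either the hypothesis or the definition of $\mathfrak{M}_{mkl}$ carries a typo. Taking the statement at face value, your proof (like the paper's) establishes the conclusion only for $k\geq l\binom{m+l}{m}-m-l+1$; you should have flagged the discrepancy rather than asserting the stated bound is ``precisely calibrated.''
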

\begin{proof}
This lemma is immediate because the average of 
$n_i(X,0)$ is  
\[
\frac{1}{n}\sum_{i=1}^{n}n_{i}(X,0)=\frac{k|X|}{m+k+l}
\geq \frac{k\mathfrak{M}_{mkl}}{m+k+l}
=\mathfrak{M}_{m,k-1,l}-\frac{m+l}{m+k+l}\binom{m+k+l}{l}
>\mathfrak{M}_{k-1}-1. \qedhere
\]
\end{proof}
In the manner of Section \ref{sec:2}, it is  hard to classify $M_{mkl}$ for $m-l+1 \leq k \leq m \binom{m+l}{m}-m-l$. 
Moreover it seems to be difficult to give  matchings, like Matching ${\rm (i)}$ or ${\rm (ii)}$, of many possibilities of  $X_k$. 
We need another idea to determine other $M_{mkl}$.

\bigskip

\noindent
\textbf{Acknowledgments.} 
The authors thank Sho Suda for providing useful information. 
The second author is supported by JSPS KAKENHI Grant Numbers 25800011, 26400003.

\end{document}